\setlist[enumerate]{nosep}
\pgfplotsset{compat=1.14}
\newcounter{author}
\renewcommand*\author[1]{%
  \stepcounter{author}%
  \ifnum\c@author=1
    \gdef\@author{#1}%
  \else
    \xdef\@author{\unexpanded\expandafter{\@author\and#1}}%
  \fi
  \csgdef{author@\the\c@author}{#1}}
\newcommand*\email[1]{%
  \csgdef{email@\the\c@author}{#1}}
\newcommand*\address[1]{%
  \csgdef{address@\the\c@author}{#1}}
  \xdef\author@count{\the\c@author}%
\newcommand*\print@authors{%
  \ifnum\c@author>\author@count
  \else
    \print@author{\the\c@author}%
    \advance\c@author by 1
    \expandafter\print@authors
  \fi}
\newcommand*\print@author[1]{%
  \par\medskip
  \begin{tabular}{@{}l@{}}%
    \textsc{Addresses of \csuse{author@#1}}\\
    \csuse{address@#1}\\
    \textit{E-mail address}:
    \href{mailto:\csuse{email@#1}}{\texttt{\csuse{email@#1}}}
  \end{tabular}}
\DeclareMathOperator{\Ad}{Ad}
\newcommand{\cmapsto}{\stackrel{c}{\longmapsto}}
\newcommand{\taumapsto}{\stackrel{\tau}{\longmapsto}}
\newcommand{\ZZ}{\mathbb{Z}}
\newcommand{\RR}{\mathbb{R}}
\newcommand{\R}{\mathbb{R}}
\newcommand{\CC}{\mathbb{C}}
\newcommand{\C}{\mathbb{C}}
\newcommand{\Omin}{\mathcal{O}_{\textrm{min}}}
\newcommand{\Onilp}{\mathcal{O}_{\textrm{nilp}}}
\newcommand{\Ominnn}{\Ominbar\cap(\Lien^+\oplus\Lien^-)}
\newcommand{\OminnnAtwo}{\Ominbar^{A_2}\cap(\Lien^+\oplus\Lien^-)}
\newcommand{\Ominn}{\Ominbar\cap\Lien^+}
\newcommand{\Onilpn}{\Onilpbar\cap\Lien^+}
\newcommand{\Ominbar}{\overline{\mathcal{O}}_\textrm{min}}
\newcommand{\Onilpbar}{\overline{\mathcal{O}}_\textrm{nilp}}
\newcommand{\slc}{\mathfrak{sl}_3}
\newcommand{\slnn}{\mathfrak{sl}_{n+1}}
\newcommand{\Lien}{\mathfrak{n}}
\newcommand{\Lieh}{\mathfrak{h}}
\newcommand{\init}{\mathrm{in}}
\newtheorem{Thm}{Theorem}[section]
\newtheorem{Lem}[Thm]{Lemma}
\newtheorem{Cor}[Thm]{Corollary}
\theoremstyle{definition}
\newtheorem{Def}[Thm]{Definition}
\newtheorem{Ex}[Thm]{Example}
\theoremstyle{remark}
\newtheorem{rem}[Thm]{Remark}
\newtheoremstyle{case}{}{}{}{}{}{:}{ }{}
\theoremstyle{case}
\def \CC {\mathbb{C}}
\DeclareSymbolFont{extraup}{U}{zavm}{m}{n}
\DeclareMathSymbol{\varheart}{\mathalpha}{extraup}{86}
\begin{document}
\title{Minimal Nilpotent Orbits and Toric Varieties}
\author{Boming Jia}
\address{Yau Mathematical Sciences Center \\ Jingzhai 301, Tsinghua University \\ Beijing, 100084, China}
\email{jiabm@tsinghua.edu.cn}

\author{Yu Li}
\address{Department of Mathematics \\ University of Notre Dame \\ 255 Hurley Bldg. \\ Notre Dame, IN 46556, USA}
\email{liyu9112@gmail.com}
\date{}

\setlength{\droptitle}{-5em}
\maketitle

\vspace{-5em}
\begin{abstract}
Let $\Ominnn$ be the collection of elements of $\mathfrak{sl}_{n+1}(\CC)$ with rank less than or equal to $1$ and with all diagonal entries equal to zero. We show that the coordinate ring $\C[\Ominnn]$ of the scheme-theoretic intersection $\Ominnn$ has a flat degeneration to the ring of $(\CC^{\times})^n$-equivariant cohomology of the projective toric variety associated with the fan of compatible subsets of almost positive roots 
%$\Phi_{\geq-1}$
of type $C_n$. Then we compute the Hilbert series of $\C[\Ominnn]$ and prove that $\Ominnn$ is reduced and Gorenstein. Moreover, our proof method allows us to prove that the scheme-theoretic intersection $\Ominn$, of which the irreducible components are known as the ``orbital varieties'', is reduced and Cohen-Macaulay.
\end{abstract}

%\tableofcontents

\section{Introduction}
%\hspace{3em}{\Huge\textbf{Pure $TM$ Tree New Bee}}\\
Let $\mathfrak{sl}_{n+1}(\CC)$ be the Lie algebra consisting of traceless $(n+1)$-by-$(n+1)$ matrices with complex entries.
The minimal (nonzero) nilpotent adjoint orbit $\Omin$ consists of rank-one (hence nilpotent) elements of $\mathfrak{sl}_{n+1}(\CC)$, and its closure $\Ominbar=\Omin\cup\{0\}.$

In the context of symplectic duality, Hikita's conjecture \cite{Hikita} predicts that for a pair
\[Y \longrightarrow X \quad \text{and} \quad Y^{\vee} \longrightarrow X^{\vee}
\]
of dual symplectic resolutions with Hamiltonian torus actions, there exists a ring isomorphism $\CC[X^{\C^\times}] \cong H^*(Y^{\vee})$ between the coordinate ring of the scheme-theoretic fixed points $X^{\C^\times}$ (with respect to a given generic embedding of $\C^\times$ into the torus that acts Hamiltonianly on $Y \rightarrow X$) and the cohomology ring of $Y^{\vee}$.  For the pair
\[
T^*\mathbb P^n \longrightarrow \Ominbar \quad \text{and} \quad \widetilde{\CC^2/\ZZ}_{n+1} \longrightarrow \CC^2/\ZZ_{n+1},
\]
where $\widetilde{\CC^2/\ZZ}_{n+1}$ is the minimal resolution of the Kleinian singularity $\CC^2/\ZZ_{n+1}$, Hikita's conjecture has recently been confirmed by P. Shlykov \cite{OminHikita}. Let $H<SL_{n+1}(\CC)$ be the maximal torus consisting of diagonal elements, and $\mathfrak{h}$ its Lie algebra.  The adjoint action of $H$ on $\Ominbar$ is Hamiltonian. Fix a generic $\C^\times \hookrightarrow H$. Then the scheme theoretic $\C^\times$-fixed points $\Ominbar^{\C^\times}$ is isomorphic to the scheme theoretic intersection $\Ominbar\cap\Lieh$, and Shlykov's theorem states:
\begin{Thm}[\cite{OminHikita}]\label{thm:Hikita}
    The coordinate ring of the scheme $\Ominbar\cap\Lieh$ is isomorphic to the cohomology ring of the minimal resolution of the Kleinian singularity, i.e.,
    $$
            \CC[\Ominbar\cap\Lieh]\cong H^*(\widetilde{\CC^2/\ZZ}_{n+1}).
    $$
\end{Thm}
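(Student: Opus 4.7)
The plan is a direct computation of both sides, followed by an explicit comparison. On the algebraic side, the defining ideal of $\Ominbar\cap\Lieh$ can be read off from the rank-variety presentation of $\Ominbar$; on the topological side, $\widetilde{\CC^2/\ZZ}_{n+1}$ deformation retracts onto its exceptional fibre, which determines its cohomology ring.

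For the algebra, write $(x_{ij})_{1\le i,j\le n+1}$ for matrix coordinates on $\mathfrak{sl}_{n+1}(\CC)$. The ideal $I_{\Ominbar}$ is generated by the $2\times 2$ minors of $(x_{ij})$, and $I_{\Lieh}=(x_{ij}:i\ne j)$. Writing $h_i:=x_{ii}$, a $2\times 2$ minor with rows $\{i,j\}$ and columns $\{k,l\}$ reduces modulo $I_{\Lieh}$ to $h_ih_j$ when $\{i,j\}=\{k,l\}$ and to $0$ otherwise. Together with the trace relation $\sum_i h_i=0$ inherited from $\mathfrak{sl}_{n+1}$, this gives
\[
\CC[\Ominbar\cap\Lieh]\;\cong\;\CC[h_1,\ldots,h_{n+1}]\Big/\bigl(\textstyle\sum_i h_i,\; h_ih_j:i\ne j\bigr).
\]
Multiplying the trace relation by $h_i$ and using $h_ih_j=0$ for $j\ne i$ then forces $h_i^2=0$, so \emph{every} product in positive degree vanishes. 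Hence, as a graded algebra, $\CC[\Ominbar\cap\Lieh]\cong\CC\oplus W$, where $W:=\bigl(\bigoplus_{i=1}^{n+1}\CC h_i\bigr)\big/\bigl\langle\sum_i h_i\bigr\rangle\cong\CC^n$ sits in degree $1$ with zero multiplication.

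On the topological side, the exceptional fibre of $\widetilde{\CC^2/\ZZ}_{n+1}\to\CC^2/\ZZ_{n+1}$ is an $A_n$-chain of $n$ rational curves $E_1\cup\cdots\cup E_n$, which is homotopy equivalent to $\bigvee_{i=1}^n S^2$ (collapse in each interior sphere an arc joining its two distinguished intersection points). Therefore $H^0=\CC$, $H^2\cong\CC^n$ with basis $\{[E_1],\ldots,[E_n]\}$, and $H^{\ge 4}=0$, forcing all cup products in positive degree to vanish. Under the Hikita degree matching $H^{2k}\leftrightarrow$ degree-$k$ part, the map $h_i\mapsto[E_{i-1}]-[E_i]$ (with the convention $E_0=E_{n+1}:=0$) descends to a ring homomorphism from the above quotient, surjects onto $H^2$, and respects the single linear relation $\sum_i h_i\mapsto 0$, thereby producing the desired graded ring isomorphism.

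The computation itself is elementary, so the only real content is in the algebraic step above: one must use that $I_{\Ominbar}$ is generated by \emph{all} $2\times 2$ minors (the classical rank-variety presentation, so that no hidden minor survives reduction modulo $I_{\Lieh}$) and that the Hikita-theoretic fixed locus $\Ominbar^{\CC^\times}$ coincides with the scheme-theoretic intersection $V(I_{\Ominbar}+I_{\Lieh})$, which follows from genericity of $\CC^\times\hookrightarrow H$ so that $\mathfrak{sl}_{n+1}^{\CC^\times}=\Lieh$. With these two inputs in place, the main conceptual content of the theorem is simply the matching of two small-dimensional algebras, both of the form $\CC\oplus\CC^n$ with trivial product in positive degree.
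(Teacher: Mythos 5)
The paper does not actually prove Theorem \ref{thm:Hikita}; it is quoted from Shlykov's work \cite{OminHikita} and used as motivation, so there is no internal proof to compare against. Your direct verification is correct and complete in outline. The algebraic reduction is right: modulo $I_{\Lieh}=(x_{ij}:i\neq j)$ the only $2\times 2$ minors with a surviving term are those with equal row and column sets, giving the relations $h_ih_j=0$ for $i\neq j$, and multiplying the trace relation by $h_i$ kills $h_i^2$, so $\CC[\Ominbar\cap\Lieh]\cong\CC\oplus\CC^n$ with trivial products in positive degree; the topological side is $\CC\oplus\CC^n$ with trivial cup products since the resolution retracts onto the $A_n$-chain of rational curves and $H^{\geq 4}$ vanishes. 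You correctly isolate the one nontrivial input, namely that the (radical) ideal of $\Ominbar$ in $\CC[\mathfrak{sl}_{n+1}]$ is generated by the $2\times 2$ minors together with the trace — this is the classical fact that the cone over the Segre variety is arithmetically Cohen--Macaulay and prime, and remains so after cutting by the trace hyperplane; without it the scheme-theoretic intersection would not be computable from those generators. Two minor remarks: since both graded rings have zero multiplication in positive degrees, any linear isomorphism $W\to H^2$ already gives the ring isomorphism, so the explicit assignment $h_i\mapsto[E_{i-1}]-[E_i]$ is not needed (though it is the natural one matching weights with simple roots); and the identification $\Ominbar^{\CC^\times}=\Ominbar\cap\Lieh$ is asserted in the paper's introduction rather than in the theorem itself, so your closing paragraph about genericity of $\CC^\times\hookrightarrow H$ is supplementary rather than part of what must be proved.
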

Let $\mu_H:{\mathcal O}_{\rm min}\rightarrow\mathfrak{h}^*$ denote the moment map under the adjoint action by $H$. Then the zero fiber
$$
    \mu_H^{-1}(0)=\mathcal O_{\rm min} \cap (\mathfrak n^+ \oplus \mathfrak n^-),
$$
where $\Lien^+$ (resp. $\Lien^-$) is the strictly upper (resp. lower) triangular Lie subalgebra of $\mathfrak{sl}_{n+1}(\CC)$. 

In this paper we prove a complementary result to Theorem \ref{thm:Hikita}. 
Let $\Sigma$ be the fan of compatible subsets of the set of almost positive roots $\Phi_{\geq-1}$ of type $C_n$. Let $X_\Sigma$ be the projective toric variety associated with the fan $\Sigma$.
\begin{Thm}\label{thm:mainThm} The coordinate ring $\CC[\Ominnn]$ degenerates flatly to the $(\CC^{\times})^n$-equivariant cohomology $H^*_{(\CC^{\times})^n}(X_{\Sigma})$, i.e., %there exists a flat family over the curve $\CC$ whose fiber over $z \in \CC^{\times} \subset \CC$ is $\Ominnn$ and whose fiber over $0 \in \CC$ is $\Spec H^*_{(\CC^{\times})^n}(X_{\Sigma})$.
there exists a flat morphism $\pi: \mathfrak{X} \rightarrow \CC$ such that
\begin{center}
    \begin{tikzcd}
	    {{\rm Spec} H^*_{(\CC^{\times})^n}(X_{\Sigma}) \cong \pi^{-1}(0)} & {\mathfrak{X}} & {\pi^{-1}(\CC^{\times}) \cong \left( \Ominnn \right) \times \CC^{\times}} \\
	    0 & {\CC} & {\CC^{\times}}
	    \arrow[hook, from=1-1, to=1-2]
	    \arrow[from=1-1, to=2-1]
	    \arrow["\pi", from=1-2, to=2-2]
	    \arrow[hook', from=1-3, to=1-2]
	    \arrow["{{\rm pr}_2}", from=1-3, to=2-3]
	    \arrow[hook, from=2-1, to=2-2]
	    \arrow[hook', from=2-3, to=2-2]
    \end{tikzcd}
\end{center}
\end{Thm}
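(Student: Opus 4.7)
The plan is a Gröbner-style flat degeneration: present $\CC[\Ominnn]$ as an explicit quotient, equip it with a filtration whose associated graded is the Stanley-Reisner (face) ring of $\Sigma$, and invoke the classical identification of this face ring with $H^*_{(\CC^\times)^n}(X_\Sigma)$ for a simplicial projective toric variety. Since $\Ominbar$ is the locus of rank-$\leq 1$ matrices in $\mathfrak{sl}_{n+1}(\CC)$, intersecting with $\Lien^+\oplus\Lien^-$ cuts out $\Ominnn$ inside the affine space with coordinates $x_{ij}$ ($i\neq j$) by the $2\times 2$ minors of the matrix $(x_{ij})$ with zeros on the diagonal. The ``diagonal'' minors collapse to the quadratic monomials $x_{ij}x_{ji}$, while the remaining minors are honest binomials $x_{ij}x_{k\ell}-x_{i\ell}x_{kj}$. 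The total number $n(n+1)$ of coordinates $x_{ij}$ equals $|\Phi_{\geq -1}|$ in type $C_n$, the cardinality of the ray set of $\Sigma$, and this numerical match already suggests the bijection that will drive the remainder of the argument.

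To produce $\pi:\mathfrak{X}\to\CC$, I would choose a generic weight $w\in\RR^{n(n+1)}$ on the coordinates $x_{ij}$, let $F^w_\bullet$ be the induced filtration on $\CC[\Ominnn]$, and set $\mathfrak{X}=\Spec\bigl(\bigoplus_{p\geq 0}F^w_p\cdot t^p\bigr)$ with $\pi$ the map induced by $t$. This Rees construction is automatically flat over $\CC[t]$: its generic fiber is canonically $\Ominnn\times\CC^\times$, and its special fiber is $\Spec\bigl(\CC[x_{ij}]/\init_w I\bigr)$, where $I$ is the ideal of $2\times 2$ minors. Thus the diagram in the theorem reduces to proving that $\init_w I$ coincides with the Stanley-Reisner ideal $I_\Sigma$ of $\Sigma$ for an appropriately chosen $w$.

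The crux is therefore combinatorial. The plan is to use the Fomin-Zelevinsky snake/non-crossing-diagonal model of the $C_n$-cluster complex, in which $\Phi_{\geq -1}$ is identified with (centrally symmetric pairs of) diagonals of a $(2n+2)$-gon, together with the boundary edges corresponding to $-\alpha_i$, and compatibility is the non-crossing condition. Under the matching bijection $x_{ij}\leftrightarrow$ diagonal, I expect: (a) the monomial minors $x_{ij}x_{ji}$ already correspond to pairs of crossing diagonals, hence already lie in $I_\Sigma$; and (b) for a suitably chosen $w$, the leading monomial of each binomial minor $x_{ij}x_{k\ell}-x_{i\ell}x_{kj}$ is again indexed by a crossing pair, while conversely every minimal non-face of $\Sigma$ is realized as such a leading term. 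Granted $\init_w I=I_\Sigma$, the classical identification of $H^*_{(\CC^\times)^n}(X_\Sigma)$ with the face ring of $\Sigma$ for a simplicial projective toric variety (see e.g.\ Cox-Little-Schenck) yields $\pi^{-1}(0)\cong\Spec H^*_{(\CC^\times)^n}(X_\Sigma)$ as required.

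The main obstacle is precisely this combinatorial matching: a single weight $w$ must simultaneously force the leading terms of all binomial $2\times 2$ minors to track exactly the crossings of the $C_n$ diagonal model, and no extraneous non-faces may appear. This is where all the type-$C_n$ input enters, and a direct verification will likely require either an explicit Gröbner basis for $I$ or an indirect Hilbert-series/dimension argument (which also positions us well for the Hilbert series and Gorenstein claims announced in the abstract).
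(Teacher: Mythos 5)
Your overall strategy is exactly the paper's: realize $\CC[\Ominnn]$ as the quotient of $\CC[x_{ij}\colon i\neq j]$ by the ideal $I$ of two-by-two minors, take a Gr\"obner/Rees degeneration over $\CC[t]$, identify the special fiber with the Stanley--Reisner ring of the type $C_n$ cluster fan, and invoke \cite[Theorem 12.4.14]{CLS} to identify that ring with $H^*_T(X_\Sigma)$. The Rees-algebra mechanism you describe is precisely what the paper delegates to \cite[Theorem 4.4.10]{LB}. But the proposal stops exactly where the proof begins: you correctly flag the combinatorial matching as ``the main obstacle'' and leave both of its ingredients unproved. First, to know that $\init_w I$ is generated by the initial terms of the minors (rather than being strictly larger), you need the minors to form a Gr\"obner basis; this is \emph{not} an off-the-shelf fact here, because the matrix has prescribed zero entries, so the classical result for generic determinantal ideals does not apply directly. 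The paper proves this by a Buchberger $S$-pair computation (its Theorem 5.6), which crucially uses the vanishing entries to produce extra monomial relations absorbing the problematic $S$-polynomials. Without this step your identification $\init_w I = I_\Sigma$ could fail by $\init_w I$ containing extra minimal generators.

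Second, the dictionary itself is not supplied. A generic weight $w$ will not work: the paper constructs a very specific spiral labeling of the off-diagonal entries so that the degree revlex leading monomial of every binomial minor, together with the monomial minors, lands exactly on the non-compatible pairs, and it proves the compatibility characterization (its Theorem 4.7) by exploiting $\tau$-invariance of the Ceballos--Pilaud compatibility degree to reduce every pair to one involving a negative simple root. Your proposed route through the Fomin--Zelevinsky centrally symmetric $(2n+2)$-gon model is a legitimate alternative encoding of the $C_n$ cluster complex, but you would still have to exhibit the bijection between diagonals and matrix positions and verify the crossing/leading-term match case by case --- work of the same order as the paper's Section 4. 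A smaller inaccuracy: the minors that degenerate to monomials are not only the $x_{ij}x_{ji}$ but all products $x_{ij}x_{ki}$ with $j\neq i\neq k$ (any $2\times 2$ submatrix meeting the diagonal), and these account for a large portion of the Stanley--Reisner ideal; in the $\mathfrak{sl}_3$ case they are \emph{all} of the relations. As written, the argument establishes the framework but not the theorem.
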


Theorem \ref{thm:mainThm} is complementary to Theorem \ref{thm:Hikita} in that, instead of taking the intersection of $\Ominbar$ with $\Lieh$, we take the intersection with the complement $\Lien^+ \oplus \Lien^-$ of $\Lieh$ in $\slnn(\CC) = \Lien^+ \oplus \Lieh \oplus \Lien^-$.  Theorem \ref{thm:mainThm} also has the special feature that it relates the root systems of types $A$ and $C$ through a mechanism which is not a ``folding'' of a simply laced Dynkin diagram.  In fact, the type $A$ and $C$ root systems that appear in Theorem \ref{thm:mainThm} are of the same rank $n$, while folding the Dynkin diagram of type $A_{2n-1}$ gives the Dynkin diagram of type $C_n$.
As an application of Theorem \ref{thm:mainThm}, we prove:
\begin{Thm} \label{thm:HilbertSeries}
    The scheme-theoretic intersection $\Ominnn$ is reduced and Gorenstein.  Moreover, its Hilbert series with respect to the action of $\CC^{\times}$ that scales the matrix entries of $\slnn(\CC)$ is given by
    $$
        h_{\CC[\Ominnn]}(t)=\frac{\displaystyle\sum_{i=0}^{n}\binom{n}{i}^2t^i}{(1-t)^{n}}.
    $$
\end{Thm}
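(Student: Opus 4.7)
The plan is to exploit the flat degeneration from Theorem \ref{thm:mainThm} and push each of the three claimed properties---Hilbert series, reducedness, and the Gorenstein property---from the special fiber down to the generic fiber. Throughout, the degeneration is equivariant for the grading $\C^\times$-action on $\slnn(\C)$ coming from scaling matrix entries, so all invariants of interest are graded and behave well under the flat family.

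First I would analyze the special fiber $\Spec H^*_{(\C^\times)^n}(X_\Sigma)$. Because the fan $\Sigma$ of compatible subsets of $\Phi_{\geq -1}$ is simplicial (each cluster in type $C_n$ has exactly $n$ elements) and $X_\Sigma$ is projective, the standard computation of equivariant cohomology for simplicial complete toric varieties gives, with complex coefficients,
$$
H^*_{(\C^\times)^n}(X_\Sigma;\C)\;\cong\;\C[\Sigma]\;=\;\C[x_\rho:\rho\in\Sigma(1)]/I_\Sigma,
$$
the Stanley--Reisner ring of the cluster complex $\Sigma$, with $I_\Sigma$ the square-free monomial ideal generated by non-faces. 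This immediately yields reducedness, since Stanley--Reisner rings of abstract simplicial complexes are always reduced. The cluster complex of type $C_n$ is moreover a simplicial sphere---it is the boundary complex of the polar dual of the cyclohedron---so by Stanley's theorem its Stanley--Reisner ring is Gorenstein. Finally, by the work of Fomin--Zelevinsky, Chapoton, and Fomin--Reading, the $h$-polynomial of the type $C_n$ cluster complex is the type $B/C$ Narayana polynomial $\sum_{i=0}^{n}\binom{n}{i}^2 t^i$, which is visibly palindromic as required for the Gorenstein conclusion; the graded Hilbert series of the Stanley--Reisner ring is therefore
$$
h_{\C[\Sigma]}(t)\;=\;\frac{\sum_{i=0}^{n}\binom{n}{i}^2\, t^i}{(1-t)^{n}}.
$$

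Having established reducedness, the Gorenstein property, and the Hilbert series on the special fiber, the next step is to push each property through the flat family $\pi:\mathfrak{X}\to\C$. Because the family is graded-flat, the Hilbert series of its fibers is constant along $\C$, so $h_{\C[\Ominnn]}(t)$ equals the series just computed. Reducedness and the Gorenstein property are both open conditions on the fibers of a flat morphism of Noetherian schemes; since the special fiber satisfies them, an open neighborhood of $0\in\C$ consists of $t$ whose fiber satisfies them, and in particular the fibers over $\C^\times$---each isomorphic to $\Ominnn$---do as well.

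The main obstacle is the combinatorial input in the first step: once the Stanley--Reisner description of $H^*_{(\C^\times)^n}(X_\Sigma)$ and the simplicial-sphere property of the cluster complex are in hand, the flat-degeneration argument is essentially formal. What genuinely requires work is the evaluation of the $h$-polynomial of the type $C_n$ cluster complex as $\sum_{i}\binom{n}{i}^2 t^i$, which rests on the polytopal realization of the cyclohedron and the theory of generalized associahedra, together with ensuring the openness of the Gorenstein locus applies here (for instance by checking that the special fiber is Cohen--Macaulay, which follows from its being a sphere, and invoking flatness to propagate this to an open neighborhood).
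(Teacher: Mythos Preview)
Your proposal is correct and follows essentially the same route as the paper: identify the special fiber with the Stanley--Reisner ring $\CC[\Delta_\Sigma]$ of the type $C_n$ cluster complex, invoke that $\Delta_\Sigma$ is a simplicial sphere to conclude Gorenstein (the paper cites \cite{CFZ02} and Theorem~\ref{Thm:simplicialSphere}), read off the $h$-polynomial as the type $C_n$ Narayana numbers (the paper cites \cite{FR}), and then transfer all three properties through the flat degeneration. The only cosmetic difference is the transfer mechanism: the paper uses the Gr\"obner-degeneration-specific fact that reducedness and Gorensteinness pass from $R/\init_>(I)$ to $R/I$ (citing \cite[Proposition~1.6.2]{BCRV}), whereas you phrase it as openness of these properties along the fibers of a flat family---but since the degeneration of Theorem~\ref{thm:mainThm} is in fact a Gr\"obner degeneration, these amount to the same thing.
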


%The fan $\Sigma$ plays a crucial role in the study of the cluster algebra $\mathcal A^{C_n}$ of finite type $C_n$ and the combinatorics related to it.  For example, the rays (resp. maximal cones) of $\Sigma$ are in natural bijection with the mutable cluster variables (resp. seeds) of $\mathcal A^{C_n}$, and mutation of seeds corresponds to moving from one maximal cone to another that shares a face of codimension one.  The number of maximal cones of $\Sigma$, namely $\sum_{i=0}^{n}\binom{n}{i}^2$, is a type $C_n$ analogue of the Catalan numbers.  Moreover, the numbers $\binom{n}{i}^2$ that appear in the Hilbert series of $\Ominnn$ are type $C_n$ analogues of the Narayana numbers.  By Theorems \ref{thm:mainThm} and \ref{thm:HilbertSeries}, geometric properties of $\Ominnn$ have implications for the study of the cluster algebra $\mathcal A^{C_n}$ and Catalan combinatorics of type $C_n$.  These aspects will be dealt with in subsequent publications.

As a consequence of Theorem \ref{thm:HilbertSeries}, we prove that $\Ominbar$ is Gorenstein (see Corollary \ref{Cor:OminGor}).  Using the perspective of symplectic singularities, Beauville \cite[Proposition 1.3]{Beauville} proved that $\Ominbar$ is rational Gorenstein.  Our method, however, has the advantage of being elementary.

The numbers $\binom{n}{0}^2, \binom{n}{1}^2, \ldots, \binom{n}{n}^2$ that appear in the Hilbert series of $\CC[\Ominnn]$ are also known as the Narayana numbers of type $C_n$, and their sum $\sum_{i=0}^n \binom{n}{i}^2$ is known as the Catalan number of type $C_n$ \cite{FR}.  Theorem \ref{thm:HilbertSeries} thus establishes a connection between type $C$ Catalan combinatorics and the geometry of $\Ominbar$.

Orbital varieties are, by definition, irreducible components of the scheme-theoretic intersection $\Onilpn$, where $\Onilp$ is an arbitrary nilpotent adjoint orbit.  They are fundamental objects studied in geometric representation theory, serving as a bridge, via the Springer correspondence, between algebraic data such as Weyl group representations and geometric properties of the flag variety.  The geometry of orbital varieties is a subtle subject.  For example, to the best of our knowledge, it is not known when an arbitrary orbital variety is reduced.  Using the same technique as in the proof of Theorem \ref{thm:HilbertSeries}, we prove:
\begin{Thm} \label{thm:orbitalVar}
    The scheme-theoretic intersection $\Ominn$ is reduced and Cohen-Macaulay.
\end{Thm}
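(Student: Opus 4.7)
The plan is to adapt the flat-degeneration technique used for Theorem \ref{thm:HilbertSeries}. Scheme-theoretically $\Ominn = \Ominnn \cap \Lien^+$, realized as the quotient of $\CC[\Ominnn]$ by the ideal $J$ generated by the strictly lower-triangular coordinate functions $\{x_{ij} : i > j\}$ on $\Lien^+ \oplus \Lien^-$. Because these coordinate linear forms are homogeneous with respect to the grading used to construct the degeneration $\mathfrak{X} \to \CC$ of Theorem \ref{thm:mainThm}, the ideal $J$ extends to a $\CC[t]$-homogeneous ideal $\widetilde J$ on $\mathfrak X$, and I would take $\mathfrak X' := \mathfrak X / \widetilde J \to \CC$ as the candidate flat degeneration of $\CC[\Ominn]$, with special fiber $H^*_{(\CC^\times)^n}(X_\Sigma)/\bar J$.

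Next, I would verify flatness of $\mathfrak X' \to \CC$ by matching graded Hilbert series on the two fibers. Since $H^*_{(\CC^\times)^n}(X_\Sigma)$ is essentially the Stanley--Reisner ring of the type $C_n$ cluster fan $\Sigma$, the quotient by the images of the strictly lower-triangular coordinates is the Stanley--Reisner ring of a subcomplex $\Sigma'$ (those faces avoiding the deleted rays), and the Hilbert series comparison reduces to a combinatorial $f$-vector computation on $\Sigma'$. Reducedness of the special fiber is then automatic, as Stanley--Reisner rings are defined by squarefree monomial ideals. For Cohen--Macaulayness, I would invoke shellability: since the full cluster complex of type $C_n$ is known to be shellable (Fomin--Reading, Athanasiadis), I would argue that the subcomplex $\Sigma'$ inherits shellability, e.g.\ via vertex-decomposability or by restricting a known shelling order. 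Once both properties hold on the special fiber, openness in the flat family transports them to the generic fiber $\Ominn$.

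The main obstacle is the combinatorial step: identifying which rays of the type $C_n$ cluster fan correspond to the strictly lower-triangular coordinates, computing the $f$-vector of the resulting subcomplex $\Sigma'$, and proving that $\Sigma'$ is Cohen--Macaulay. Vertex-deletion need not preserve shellability in general, so the argument should exploit the explicit combinatorics of the cluster complex rather than formal properties. A useful consistency check is to confirm that the facets of $\Sigma'$ match, in number and structure, the known irreducible components (orbital varieties) of $\Ominn$, providing an independent verification of both the flatness claim and the combinatorial description.
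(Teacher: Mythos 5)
Your route is genuinely different from the paper's, and it has two real gaps. The first concerns flatness. Quotienting the Gr\"obner degeneration $\mathfrak X \to \CC$ by the lift $\widetilde J$ of the ideal of lower-triangular coordinates produces a family whose special fiber is $R/(\init_>(I_2(Z))+J)$, but whose flatness is \emph{equivalent} to the equality $\init_>(I_2(Z)+J)=\init_>(I_2(Z))+J$. That equality is automatic for degree revlex only when the variables being killed come last in the variable order, and in the labeling ($\spadesuit$) the lower- and upper-triangular variables are interleaved; so it must be proved. You propose to prove it by comparing Hilbert series of the two fibers, but the generic fiber's Hilbert series is $h_{\CC[\Ominn]}(t)$, which you have no independent means of computing --- that computation is essentially the content of the theorem, so the verification is circular. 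The paper avoids this entirely by never passing through $\Ominnn$: Theorem \ref{Thm:2by2Groebner} is stated for an arbitrary zero pattern, so it applies directly to $Z'=\{(i,j)\colon i\ge j\}$ and gives a universal Gr\"obner basis $\mathcal G_2(Z')$ for the ideal of $\Ominn$ inside $\CC[\Lien^+]$, hence a direct squarefree monomial degeneration of $\CC[\Ominn]$ with no residual flatness to check.

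The second gap is the special fiber itself. Even granting the degeneration, your $\Sigma'$ is the induced subcomplex of the type $C_n$ cluster complex on the vertices in the upper-triangular positions of ($\dagger$), and its Cohen--Macaulayness is exactly what you leave open: as you concede, vertex deletion need not preserve shellability, and neither Fomin--Reading nor Athanasiadis hands you a shelling of this restriction. The paper's degeneration (via the order ($\varheart$)) lands instead on a far more tractable object, the order complex of the poset of positive roots of $A_n$ (intervals $[i,j]$ ordered by containment), whose facets are monotone lattice paths and which is shelled explicitly by an elementary corner-flipping argument. To salvage your approach you would have to identify $\Sigma'$ concretely, prove it Cohen--Macaulay, and compute its $f$-vector --- at which point you will have reconstructed the direct argument in a harder setting. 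Finally, your proposed consistency check is based on a misconception: the facets of the degenerate complex index the components of the \emph{special} fiber, not the orbital varieties. Already for $n=3$ the paper's complex has $2^{n-1}=4$ facets while $\Ominn$ has only $n=3$ irreducible components (standard tableaux of hook shape $(2,1^{n-1})$), so the counts are not supposed to match.
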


This paper is organized as follows. In Section 2, we recall basic definitions and properties of toric varieties and Stanley-Reisner rings. In Section 3, we study the case of $\mathfrak{sl}_3(\mathbb{C})$. By exhibiting a bijection between the almost positive roots of type $C_2$ and the roots of type $A_2$, we prove that $\C[\Ominnn]$ not only degenerates to $H^*_{(\CC^{\times})^2}(X_\Sigma)$ but is in fact isomorphic to it. In Section 4, we generalize this bijection to type $C_n$ and give explicit description of compatibility of almost positive roots. In Section 5, we recall basic definitions in computational algebraic geometry and prove a Gr\"obner basis result for matrices with prescribed zero entries. In Section 6, we apply the Gr\"obner basis result to prove our main theorem on flat degeneration and compute the Hilbert series of $\C[\Ominnn]$. The statements that $\Ominnn$ is reduced and Gorenstein, and $\Ominn$ is reduced and Cohen-Macaulay, are proved in the same section.

{\bf Acknowledgments} We would like to thank Joel Kamnitzer, Peng Shan, and Pavel Shlykov for stimulating discussions and useful suggestions.  Boming Jia was supported by NSFC Grant No. 12225108 and the Shuimu Scholar Program in Tsinghua University.

\section{Toric Varieties and Stanley-Reisner Rings}
We first recall several basic definitions and facts in toric geometry.
\begin{Def}
A \emph{toric variety} is an irreducible normal variety $X$ containing a torus $(\CC^\times)^n$ as an open dense subset, such that the action of the torus on itself by left multiplication extends to an action on $X$.
\end{Def}

\begin{Def} \label{def:cone}
    Let $L$ be a lattice in $\RR^n$, i.e., free $\ZZ$-submodule such that $L \otimes_{\ZZ} \RR = \RR^n$.  A \emph{strongly convex rational polyhedral cone} in $\RR^n$ is a subset of the form
\[
\sigma = \{\sum_{v \in V} c_v v \colon c_v \geq 0\},
\]
where $V \subset L$ is finite, such that $\sigma$ does not contain a nonzero vector subspace of $\RR^n$.
\end{Def}
\begin{Def}
      Fix a lattice $L$ in $\RR^n$.  A \emph{fan} in $\RR^n$ is a finite set $\Sigma$ of strongly convex rational polyhedral cones (with respect to $L$) such that
\begin{enumerate}[label=(\roman*),nolistsep]
    \item If $\sigma \in \Sigma$ and $\tau$ is a face of $\sigma$, then $\tau \in \Sigma$;
    \item If $\sigma, \tau \in \Sigma$, then $\sigma \cap \tau$ is a face of both.
\end{enumerate}

    A fan $\Sigma$ uniquely determines a toric variety $X_{\Sigma}$ for the torus $T = L \otimes_{\ZZ} \CC^{\times} \cong (\CC^{\times})^n$ whose lattice of one-parameter subgroups is $L$.

    We say that a fan $\Sigma$ is \emph{complete} if the union of its cones is $\RR^n$, and \emph{simplicial} if for any $\sigma \in \Sigma$, the minimal set of generators of $\sigma$ is linearly independent over $\RR$.
\end{Def}

\begin{Def}
Let $\Delta$ be an abstract simplicial complex on ground set $[1,n] = \{1, \dots, n\}$. The \emph{Stanley-Reisner ring} (or face ring) of $\Delta$ is $\CC[\Delta] = \CC[x_1, \dots, x_n]/I_\Delta$, where $I_\Delta$ is the ideal generated by the monomials $\prod_{i \in S} x_i$ for $S \notin \Delta$.
\end{Def}

\begin{Thm}\cite[Theorem 12.4.14]{CLS} \label{thm:equivCohlgy} 
    Let $\Sigma$ be a complete simplicial fan in $\R^n$.  Then the $T$-equivariant cohomology $H^*_T(X_\Sigma)$ of $X_{\Sigma}$ is isomorphic to the Stanley-Reisner ring $\CC[\Delta_\Sigma]$, where $\Delta_\Sigma$ is the abstract simplicial complex whose ground set is the set of all rays in $\Sigma$ and a set $S$ of rays in $\Sigma$ is a face of $\Delta_{\Sigma}$ if and only if $S$ consists of the extremal rays of a cone in $\Sigma$.
\end{Thm}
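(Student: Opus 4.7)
The plan is to exhibit an explicit ring isomorphism $\phi\colon\CC[\Delta_\Sigma] \xrightarrow{\sim} H^*_T(X_\Sigma)$ sending each generator $x_\rho$ of the Stanley-Reisner ring (for $\rho$ a ray of $\Sigma$) to the $T$-equivariant fundamental class $[D_\rho]_T \in H^2_T(X_\Sigma)$ of the $T$-invariant prime Weil divisor $D_\rho$ (the closure of the codimension-one $T$-orbit attached to $\rho$). Well-definedness then amounts to the geometric assertion that if $S$ is a set of rays not forming the ray-set of any cone of $\Sigma$, the scheme-theoretic intersection $\bigcap_{\rho \in S} D_\rho$ is empty (check locally on each $T$-invariant affine chart $U_\sigma$), so the corresponding monomial in the $[D_\rho]_T$ vanishes in $H^*_T(X_\Sigma)$.

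For surjectivity, I would invoke equivariant formality: because $\Sigma$ is complete and simplicial, a generic one-parameter subgroup of $T$ yields a Bialynicki-Birula decomposition of $X_\Sigma$ indexed by the maximal cones, all odd cohomology of $X_\Sigma$ vanishes, and $H^*_T(X_\Sigma)$ is a free $H^*(BT)$-module on the equivariant classes $[V(\sigma)]_T$ of the closed torus-orbit closures $V(\sigma) = \bigcap_{\rho \in \sigma(1)} D_\rho$. Each such basis element is a monomial in the $[D_\rho]_T$, and the coefficient ring $H^*(BT)$ itself is generated inside $H^*_T(X_\Sigma)$ by the linear expressions $\sum_\rho \langle u, u_\rho \rangle [D_\rho]_T$ for $u$ in the dual lattice, so the classes $[D_\rho]_T$ generate $H^*_T(X_\Sigma)$ as a $\CC$-algebra and $\phi$ is surjective.

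For injectivity, I would compare Hilbert series. With $\deg x_\rho = 2$, the Stanley-Reisner ring $\CC[\Delta_\Sigma]$ is Cohen-Macaulay (by Reisner's criterion, since $\Delta_\Sigma$ is shellable for a complete simplicial fan) with Hilbert series $\bigl(\sum_i h_i(\Sigma)\, t^{2i}\bigr)/(1-t^2)^n$, where $(h_0, \ldots, h_n)$ is the $h$-vector of $\Sigma$. On the other hand, equivariant formality gives $\sum_k \dim H^k_T(X_\Sigma)\, t^k = P(X_\Sigma, t)/(1-t^2)^n$, and the Danilov-Jurkiewicz computation of the Poincar\'e polynomial of a simplicial projective toric variety yields $P(X_\Sigma, t) = \sum_i h_i(\Sigma)\, t^{2i}$, so the two series agree and the surjection $\phi$ must be an isomorphism. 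The subtle step, and the main obstacle, is surjectivity: rigorously pinning down that the $[D_\rho]_T$ generate requires careful bookkeeping of the equivariant cell structure, and a clean alternative route is to describe $H^*_T(X_\Sigma)$ GKM-theoretically as the ring of piecewise polynomial functions on $|\Sigma|$ and match that description directly against the combinatorial presentation of $\CC[\Delta_\Sigma]$.
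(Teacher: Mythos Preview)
The paper does not supply its own proof of this statement: it is quoted as a background result with the citation \cite[Theorem 12.4.14]{CLS} and no argument is given. There is therefore nothing in the paper to compare your proposal against.

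As a standalone sketch your outline is essentially the standard proof and is close to what one finds in the cited reference. Two small cautions. First, in your injectivity step you appeal to the Danilov--Jurkiewicz computation for ``simplicial projective'' toric varieties, but the hypothesis here is only \emph{complete simplicial}; the conclusion (odd cohomology vanishes, Betti numbers are the $h$-vector) still holds in that generality, but you should not silently assume projectivity, and the Bialynicki--Birula argument you invoke for equivariant formality likewise needs a replacement in the non-projective case (one typically uses that $X_\Sigma$ is a rationally smooth complete variety with only even cohomology). Second, the alternative you flag at the end --- identifying $H^*_T(X_\Sigma)$ with the ring of piecewise polynomial functions on $\Sigma$ and then matching that with $\CC[\Delta_\Sigma]$ --- is in fact the route taken in \cite{CLS}, and is cleaner than the divisor-class/Hilbert-series argument precisely because it sidesteps the bookkeeping you identify as the main obstacle.
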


Next we recall some algebraic properties of the Stanley-Reisner ring.  The ground set of all simplicial complexes in this paper are assumed to be finite.  In particular, the set of faces is also finite.  An abstract simplicial complex $\Delta$ is called pure if its maximal faces have the same dimension.

\begin{Def}
    Let $\Delta$ be a pure simplicial complex.  We say that $\Delta$ is \emph{shellable} if its maximal faces can be enumerated
    \[
    F_1, F_2, \ldots, F_l
    \]
    in such a way that, for each $i \in [2,l]$, the intersection of $F_i$ with the simplicial subcomplex of $\Delta$ generated by $F_1, F_2, \ldots, F_{i-1}$ is generated by a nonempty set of maximal proper faces of $F_i$.  The enumeration $F_1, F_2, \ldots, F_l$ is called a \emph{shelling} of $\Delta$.
\end{Def}

\begin{Thm} \cite[Theorem 5.1.13]{BH98} \label{Thm:shellable}
    If $\Delta$ is a shellable simplicial complex, then the Stanley-Reisner ring $\CC[\Delta]$ is Cohen-Macaulay.
\end{Thm}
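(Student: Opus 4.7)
The plan is to invoke Reisner's criterion: the Stanley-Reisner ring $\CC[\Delta]$ is Cohen-Macaulay if and only if, for every face $F \in \Delta$ (including the empty face), the reduced simplicial homology $\tilde{H}_i(\mathrm{link}_\Delta(F); \CC)$ vanishes for all $i < \dim \mathrm{link}_\Delta(F)$. This reduces the algebraic statement to a purely topological claim about shellable complexes and their links.

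First I would show that the link of any face in a shellable complex is itself shellable. Given a shelling $F_1, F_2, \ldots, F_l$ of $\Delta$ and a face $G \in \Delta$, I pick out the sub-enumeration $F_{i_1}, F_{i_2}, \ldots, F_{i_m}$ (with $i_1 < \cdots < i_m$) consisting of those maximal faces that contain $G$, and consider the corresponding faces $F_{i_k} \setminus G$ in $\mathrm{link}_\Delta(G)$. A direct verification shows that this induced enumeration is a shelling of $\mathrm{link}_\Delta(G)$: the intersection $F_{i_k} \cap \langle F_{i_1}, \ldots, F_{i_{k-1}}\rangle$, being generated by a nonempty set of codimension-one faces of $F_{i_k}$ all containing $G$, descends after deletion of $G$ to a nonempty union of codimension-one faces of $F_{i_k} \setminus G$ inside the link.

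Next I would establish that a pure shellable complex of dimension $d$ has the homotopy type of a wedge of $d$-spheres, so that its reduced homology is concentrated in top degree. This follows by induction on the shelling: at each step, attaching $F_i$ to the subcomplex $\Delta_{i-1}$ generated by the previous facets is either a gluing along a proper subcomplex of $\partial F_i$ that is a union of facets (hence a contractible $(d-1)$-ball, so the homotopy type is unchanged), or a gluing along the entire $\partial F_i$ (which wedges on a new $d$-sphere). Combining this with the previous step, Reisner's criterion applies both to $\Delta$ and to every link, yielding Cohen-Macaulayness of $\CC[\Delta]$.

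The main obstacle is the careful verification of the shelling-preservation property under passage to links; one must track the codimension-one intersection condition precisely and confirm that passing from $F_{i_k}$ to $F_{i_k} \setminus G$ preserves both the nonemptiness and the facet-generated nature of the intersection with earlier faces. An alternative that sidesteps topology would be to use the shelling to produce a partition of the face poset into Boolean intervals $[R(F_i), F_i]$ (with $R(F_i)$ the restriction face), construct a homogeneous system of parameters $\theta_1, \ldots, \theta_{d+1}$ from suitable linear combinations of the vertex variables, and then exhibit the monomials $\prod_{v \in R(F_i)} x_v$ as an explicit basis of $\CC[\Delta] / (\theta_1, \ldots, \theta_{d+1})$, proving directly that $\CC[\Delta]$ is a free module over $\CC[\theta_1, \ldots, \theta_{d+1}]$.
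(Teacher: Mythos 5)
The paper itself gives no proof of this statement---it is quoted from Bruns--Herzog \cite[Theorem 5.1.13]{BH98}---so the only meaningful comparison is with the standard arguments in the literature. Your first route (Reisner's criterion, plus link-shellability, plus Bj\"orner's theorem that a pure shellable $d$-complex is homotopy equivalent to a wedge of $d$-spheres) is correct, but it is logically heavier than what it proves: Reisner's criterion rests on Hochster's local cohomology formula, whereas the proof in Bruns--Herzog is a direct induction on the shelling, comparing $\CC[\Delta_i]$ with $\CC[\Delta_{i-1}]$ via a short exact sequence whose kernel is a shifted polynomial ring and then applying the depth lemma; no topology enters. Your alternative route (restriction faces, the partition of the face poset into Boolean intervals $[R(F_i),F_i]$, and freeness over a linear system of parameters) is the Kind--Kleinschmidt/Stanley argument and is closer in spirit to the cited proof. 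One point in the first route deserves the care you anticipated: the shelling hypothesis controls $F_{i_k}\cap\langle F_1,\dots,F_{i_k-1}\rangle$, the intersection with \emph{all} earlier facets, not merely those containing $G$, so the link condition is not a literal restriction of the original one. It does follow: if $H$ is a face of $\mathrm{link}_\Delta(G)$ contained in $F_{i_k}\setminus G$ and in some earlier $F_{i_j}\setminus G$, then $H\cup G$ lies in $F_{i_k}\setminus\{v\}$ for some $v$ in the prescribed generating set, necessarily with $v\notin G$; and $F_{i_k}\setminus\{v\}$ is contained in some earlier facet, which contains $G$ and hence is one of the $F_{i_j}$. With that verification supplied, both of your routes are complete.
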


\begin{Thm} \cite[Corollary 5.6.5]{BH98} \label{Thm:simplicialSphere}
    Let $\Delta$ be a simplicial sphere, i.e., an abstract simplicial complex whose geometric realization is homeomorphic to a sphere.  Then the Stanley-Reisner ring $\CC[\Delta]$ is Gorenstein.
\end{Thm}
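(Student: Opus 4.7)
The plan is to combine Reisner's criterion for Cohen-Macaulayness with the standard characterization of Gorenstein Stanley-Reisner rings via Cohen-Macaulay type.  The central combinatorial input is that the link of any face of a simplicial sphere is itself a homology sphere, a fact which follows from excision on $|\Delta|$.

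First I would establish that $\CC[\Delta]$ is Cohen-Macaulay via Reisner's criterion, which says that $\CC[\Delta]$ is Cohen-Macaulay if and only if, for every face $F \in \Delta$ (including $F = \emptyset$) and every integer $i < \dim \mathrm{lk}_{\Delta}(F)$, the reduced simplicial homology $\tilde{H}_i(\mathrm{lk}_{\Delta}(F); \CC)$ vanishes.  Since each link is a homology sphere (of dimension $\dim \Delta - \dim F - 1$), its reduced homology is concentrated in top degree, and Reisner's criterion is satisfied automatically.

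Next I would upgrade Cohen-Macaulay to Gorenstein using the criterion that a Cohen-Macaulay graded ring is Gorenstein if and only if its Cohen-Macaulay type equals $1$.  By Hochster's formula for the multigraded pieces of the canonical module of a Stanley-Reisner ring, the type of $\CC[\Delta]$ can be read off from the reduced homology of $\Delta$ and of its links, provided $\Delta$ has no ``universal'' vertices (vertices lying in every facet).  A simplicial sphere of positive dimension has no universal vertices and satisfies $\dim_{\CC} \tilde{H}_{\dim \Delta}(\Delta; \CC) = 1$, so its Cohen-Macaulay type equals $1$, yielding Gorensteinness.  The base case $\dim \Delta = 0$, in which $\Delta$ is two disjoint points and $\CC[\Delta] = \CC[x,y]/(xy)$, can be verified by hand.

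The main technical hurdle is the Hochster-formula bookkeeping in the second step: one must match multigraded pieces of the canonical module with reduced homology groups and verify that the only nonzero contribution to the socle (after quotienting by a linear system of parameters) comes from the top homology of $\Delta$ itself, which uses the Dehn-Sommerville symmetry of the $h$-vector for simplicial spheres.  The first step and the topological fact that links of simplicial spheres are homology spheres are standard exercises in combinatorial topology.
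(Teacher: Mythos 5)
The paper offers no proof of this statement---it is quoted from \cite[Corollary 5.6.5]{BH98}---and your overall architecture (Reisner's criterion together with the fact that links of faces of a triangulated sphere are homology spheres, followed by a combinatorial Gorenstein criterion) is precisely the standard route taken there. Your first step, and the topological input that $\tilde{H}_i(\mathrm{lk}_\Delta F)$ agrees with the reduced homology of a sphere of dimension $\dim\Delta - \dim F - 1$, are correct.

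The gap is in the second step. The pair of conditions you actually invoke---$\Delta$ is Cohen--Macaulay with no cone (``universal'') vertices and $\dim_{\CC}\tilde{H}_{\dim\Delta}(\Delta;\CC)=1$---does \emph{not} imply that the type is $1$. Counterexample: take a triangulated $2$-sphere and glue one extra $2$-simplex onto it along a single edge (a ``fin''). The resulting complex is pure and Cohen--Macaulay (Reisner's criterion is easily checked), has no cone vertices, and deformation retracts onto the sphere, so $\tilde{H}_2 \cong \CC$; yet the link of the apex of the fin is a single edge, which is not a homology $1$-sphere, so the Stanley--Reisner ring is not Gorenstein. The criterion you need (Stanley; \cite[Theorem 5.6.1]{BH98}) is that $\mathrm{core}(\Delta)$ be a homology sphere in the strong sense that \emph{every} link $\mathrm{lk}_\Delta F$, including $F=\emptyset$, has the reduced homology of a sphere of the appropriate dimension; the top homology of $\Delta$ alone does not control the socle after passing to an Artinian reduction. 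Relatedly, Dehn--Sommerville symmetry of the $h$-vector is a \emph{consequence} of Gorensteinness, not a usable input: a Cohen--Macaulay complex with symmetric $h$-vector need not be Gorenstein. The repair is immediate, since your first step already establishes exactly the ``all links are homology spheres'' hypothesis---you should cite that full condition in Stanley's criterion rather than routing through the weaker pair of conditions you stated.
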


\section{The Case of \texorpdfstring{$\OminnnAtwo\subset\slc(\C)$}{A2}}
In the case $n=2$,
\begin{equation} \label{eq:A2}
\OminnnAtwo=\left\{\begin{pmatrix}
        0 & x_4 & x_1\\
        x_2 & 0 & x_5\\
        x_6 & x_3 & 0
    \end{pmatrix}\,\middle\vert\ \,
    \begin{tabular}{@{}c@{}c@{}} $x_2x_4=x_4x_5=x_1x_2=0$    \\ $x_2x_3=x_3x_5=x_5x_6=0$ \\
    $x_4x_6=x_1x_3=x_1x_6=0$
    \end{tabular}
\right\}.
\end{equation}
Let $\varepsilon_1,\varepsilon_2$ be the standard basis of $\R^2$.
    Let $\Phi^{C_2}=\{\pm(\varepsilon_1\pm\varepsilon_2),\pm2\varepsilon_1,\pm2\varepsilon_2\}$ be the root system of type $C_2$, with simple roots $\Pi^{C_2}=\{\alpha_1=\varepsilon_1-\varepsilon_2,\,\alpha_2=2\varepsilon_2\}$, and $\Phi^{C_2}_+=\{\alpha_1,\alpha_2,\alpha_1+\alpha_2,2\alpha_1+\alpha_2\}$ be the positive roots with respect to $\Pi^{C_2}$.
    Following \cite{FZ}, define the almost positive roots to be $$\Phi^{C_2}_{\geq-1}=\Phi^{C_2}_+\sqcup -\Pi^{C_2}.$$ 
    Then we have the following bijection between the almost positive roots $\Phi^{C_2}_{\geq-1}$ of type $C_2$ and all the roots $\Phi^{A_2}$ of type $A_2$:
\iffalse
\begin{multicols}{2}
\hspace{5em}
\begin{tikzpicture}[scale=0.8, >=stealth]
    \tikzset{label/.style={, inner sep=1pt}}
    \draw[ultra thick, brown!50!black, ->] (0,0) -- (2,0) 
        node[label, above] {$2\alpha_1+\alpha_2$};  
    \draw[ultra thick, blue, ->] (0,0) -- (0,2) 
        node[label, left] {$\alpha_2$};  
    \draw[ultra thick, brown!50!black, ->] (0,0) -- (1,1) 
        node[label, above right] {$\alpha_1+\alpha_2$};  
    \draw[ultra thick, red, ->] (0,0) -- (1,-1) 
        node[label, below right] {$\alpha_1$};  
    \draw[ultra thick, green!50!black, dashed, ->] (0,0) -- (-1,1) 
        node[label, above left] {$-\alpha_1$}; 
    \draw[ultra thick, green!50!black, dashed, ->] (0,0) -- (0,-2) 
        node[label, right] {$-\alpha_2$};  
\hspace{10em}
    $\setlength\arraycolsep{2pt}\def\arraystretch{1.5}\begin{pmatrix}
          0 & \color{red}\alpha_1 & \color{brown!50!black}2\alpha_1+\alpha_2\\
          \color{blue}\alpha_2 & 0 & \color{brown!50!black}\alpha_1+\alpha_2\\
          \color{green!50!black}-\alpha_1 & \color{green!50!black}-\alpha_2 & 0
    \end{pmatrix} $ 
    \end{tikzpicture}
\end{multicols}
\fi
\begin{multicols}{2}
\hspace{5em}
\begin{tikzpicture}[scale=0.8, >=stealth]
    \tikzset{label/.style={, inner sep=1pt}}
    \draw[ultra thick, brown!50!black, ->] (0,0) -- (2,0) 
        node[label, above] {$2\alpha_1+\alpha_2$};  
    \draw[ultra thick, blue, ->] (0,0) -- (0,2) 
        node[label, left] {$\alpha_2$};  
    \draw[ultra thick, brown!50!black, ->] (0,0) -- (1,1) 
        node[label, above right] {$\alpha_1+\alpha_2$};  
    \draw[ultra thick, red, ->] (0,0) -- (1,-1) 
        node[label, below right] {$\alpha_1$};  
    \draw[ultra thick, green!50!black, dashed, ->] (0,0) -- (-1,1) 
        node[label, above left] {$-\alpha_1$}; 
    \draw[ultra thick, green!50!black, dashed, ->] (0,0) -- (0,-2) 
        node[label, right] {$-\alpha_2$};  
\hspace{10em}
    $\setlength\arraycolsep{2pt}\def\arraystretch{1.5}\begin{pmatrix}
          0 & \color{green!50!black}-\alpha_1 & \color{green!50!black}-\alpha_2 & \\
          \color{brown!50!black}2\alpha_1+\alpha_2 & 0 & \color{red}\alpha_1 \\
          \color{brown!50!black}\alpha_1+\alpha_2 & \color{blue}\alpha_2 & 0
    \end{pmatrix} $ 
    \end{tikzpicture}
\end{multicols}

\vspace{-1em}
Notice that two almost positive roots in $\Phi^{C_2}_{\geq-1}$ are adjacent to each other if and only if they are in the same row or column under the above bijection.

Let $L \subset \RR^2$ be the lattice generated by $\Pi^{C_2}$, and $\Sigma$ be the fan in $\R^2$ whose maximal cones are spanned by adjacent almost positive roots in $\Phi^{C_2}_{\geq-1}$. Let $X_{\Sigma}$ be the toric variety, for the torus $T = L \otimes_{\ZZ} \CC^{\times} \cong (\CC^{\times})^2$ whose lattice of one-parameter subgroups is $L$, associated with the fan $\Sigma$.

\begin{Thm}
    We have
    \[
    {\C}[\OminnnAtwo]\cong H^*_T(X_{\Sigma}).
    \]
\end{Thm}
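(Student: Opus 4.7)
The plan is to identify both $\CC[\OminnnAtwo]$ and $H^*_T(X_\Sigma)$ as Stanley-Reisner rings of the \emph{same} simplicial complex on six vertices, and then invoke Theorem \ref{thm:equivCohlgy}.

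First, I would observe that the nine defining relations in (\ref{eq:A2}) — i.e., the $2\times 2$ minors of the matrix after the three diagonal entries are set to zero — are each a product of two distinct variables among $x_1,\ldots,x_6$. Thus the defining ideal $I$ of $\OminnnAtwo$ inside $\CC[x_1,\ldots,x_6]$ is a squarefree quadratic monomial ideal, and by definition $\CC[\OminnnAtwo]=\CC[\Delta]$ is the Stanley-Reisner ring of the abstract simplicial complex $\Delta$ on $\{x_1,\ldots,x_6\}$ whose minimal non-faces are precisely those nine pairs. Since no individual $x_i$ lies in $I$, every singleton is a face; and since all minimal non-faces are edges, $\Delta$ is $1$-dimensional, with its six maximal faces being the six complementary pairs $\{x_1,x_4\},\,\{x_1,x_5\},\,\{x_2,x_5\},\,\{x_2,x_6\},\,\{x_3,x_4\},\,\{x_3,x_6\}$.

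Second, I would use the bijection between matrix entries and almost positive roots displayed above to identify each $x_i$ with a root in $\Phi^{C_2}_{\geq-1}$, and then verify by direct inspection that the six edges of $\Delta$ just listed correspond \emph{exactly} to the six pairs of matrix positions lying in a common row or column. Combined with the remark in the text that two almost positive roots are adjacent if and only if their corresponding entries share a row or column, this identifies these six edges with the six maximal cones of $\Sigma$. Hence $\Delta$ agrees with the simplicial complex $\Delta_\Sigma$ of Theorem \ref{thm:equivCohlgy}.

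Finally, one checks directly that $\Sigma$ is a complete simplicial fan in $\RR^2$: enumerated cyclically, the six rays partition the plane into six strongly convex $2$-dimensional cones, each spanned by two linearly independent primitive vectors of $L$. Theorem \ref{thm:equivCohlgy} then gives
\[
\CC[\OminnnAtwo] \;=\; \CC[\Delta] \;=\; \CC[\Delta_\Sigma] \;\cong\; H^*_T(X_\Sigma).
\]
The only genuine content of the argument is the combinatorial coincidence between the row/column structure of the six off-diagonal entries of a $3\times 3$ matrix and the adjacency relation among the six almost positive roots of type $C_2$; there is no serious obstacle here because the defining ideal is already monomial. This pleasant situation will break in the general type $A_n$ case of Theorem \ref{thm:mainThm}, where the defining ideal of $\Ominnn$ is no longer monomial and a Gr\"obner degeneration must be introduced — which is why the main theorem asserts only a flat degeneration rather than an isomorphism.
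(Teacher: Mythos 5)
Your proposal is correct and follows essentially the same route as the paper: both present $H^*_T(X_\Sigma)$ via Theorem \ref{thm:equivCohlgy} as a Stanley--Reisner ring and then match its defining monomials with the nine monomial relations of (\ref{eq:A2}) under the bijection, using the observation that adjacency of almost positive roots corresponds to sharing a row or column. Your write-up merely makes explicit a few points the paper leaves implicit (the list of the six complementary faces and the completeness and simpliciality of $\Sigma$), which is fine.
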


\begin{proof}
    By Theorem \ref{thm:equivCohlgy},
    \[
        H^*_T(X_{\Sigma}) \cong \CC\left[x_\alpha: \alpha\in\Phi^{C_2}_{\geq-1}\right]/\big(x_\alpha x_\beta, \textrm{ such that } \alpha,\beta \textrm{ are not adjacent}\big).
    \]
    Under the bijection above, the monomial $x_\alpha x_\beta$ for $\alpha,\beta$ non-adjacent corresponds precisely to the monomial given by a two-by-two minor of a generic element of $\slc(\C)$ restricted to $\Lien^+\oplus\Lien^-$, i.e., a monomial that appears in Equation (\ref{eq:A2}).  The desired isomorphism follows.
\end{proof}

\section{Compatibility of Almost Positive Roots of Type \texorpdfstring{$C_n$}{Cn}}
In this section, we recall the notation of compatibility degree for the root system $\Phi$ of type $C_n$.  The Dynkin diagram of the root system of type $C_n$ is
\begin{center}
    \begin{dynkinDiagram}[text style/.style={scale=1.0},
            edge length=1.2cm,
            labels={1,2,n-2,n-1,n},
            label macro/.code={\alpha_{\drlap{#1}}}
            ]C{}
    \end{dynkinDiagram}
\end{center}
Let $\varepsilon_1,\dots,\varepsilon_n$ be the standard basis of $\R^n$.  It is convenient to use the presentation
\begin{align*}
    \alpha_1 = \varepsilon_1 - \varepsilon_2, ~ \alpha_2 = \varepsilon_2 - \varepsilon_3, ~ \alpha_3 = \varepsilon_3 - \varepsilon_4, \ldots, \alpha_{n-1} = \varepsilon_{n-1} - \varepsilon_n, ~ \alpha_n = 2 \varepsilon_n
\end{align*}
of the simple roots interchangeably.  With the $\varepsilon$'s, the positive roots are
\begin{align*}
    \{\varepsilon_i \pm \varepsilon_j: 1 \le i < j \le n\} \sqcup \{2 \varepsilon_i: 1 \le i \le n\}.
\end{align*}

Fix a Coxeter word $$\underline c = (s_1, s_2, \cdots, s_{n-1}, s_n)$$ and let 
$
    c=s_1s_2\cdots s_{n-1}s_n
$
be the corresponding Coxeter element, where $s_i$ is the $i$-th simple reflection in the Weyl group of type $C_n$.  Following \cite{FZ}, for a root system $\Phi = \Phi_+ \sqcup \Phi_-$ with simple roots $\Pi$, define the set of almost positive roots to be 
\[
\Phi_{\geq -1} = \Phi_+ \sqcup - \Pi.
\]
\begin{Def}
    Following \cite{CP}, define a piecewise linear map $\tau:\Phi_{\ge -1} \rightarrow \Phi_{\ge -1}$
\begin{align*}
    \tau(\alpha)=
    \begin{cases}
        s_1 s_2 \cdots s_{i-1} (\alpha_i) ~ & \text{if} ~ \alpha = -\alpha_i ~ \exists i \in [1,n] \\
        -\alpha_i ~ & \text{if} ~ \alpha = s_n s_{n-1} \cdots s_{i+1} (\alpha_i) ~ \exists i \in [1,n] \\
        c(\alpha) ~ & \text{otherwise}.
    \end{cases}
\end{align*}
\end{Def}

If we would like to emphasize that a root system $\Phi$ is of Dynkin type $X$, we often indicate this by writing $\Phi^X$ in place of $\Phi$.  We define a bijection $\Phi^{C_n}_{\ge -1}\stackrel{1:1}{\longrightarrow}\Phi^{A_n}$ by placing each element $\alpha\in\Phi^{C_n}_{\ge -1}$ in the following array such that the location of $\alpha$ corresponds to the root space of the image of $\alpha$ in $\Phi^{A_n}$ under the bijection.
\begin{align} \label{Eqn:arrTypeA}
\scalebox{1}{
$
\NiceMatrixOptions{margin=4pt,renew-matrix}
\setlength\arraycolsep{0.33em}\def\arraystretch{1.33}
\begin{pNiceArray}{cccccccc}
    \CodeBefore
%    \tikz \draw [fill=red!15] (1-|2) |- (1-|8) |- (7-|8) |- (7-|7) |- (6-|7) |- (6-|6) |- (5-|6) |- (5-|5) |- (4-|5) |- (4-|4) |- (3-|4) |- (3-|3) |- (2-|3) |- (2-|2) |- cycle;
%    \tikz \draw [fill=blue!15] (1-|8) |- (1-|9) |- (8-|9) |- (8-|8) |- cycle;
%    \tikz \draw [fill=green!15] (8-|1) |- (8-|8) |- (9-|8) |- (9-|1) |- cycle;
%    \tikz \draw [fill=yellow!15] (2-|1) |- (2-|2) |- (3-|2) |- (3-|3) |- (4-|3) |- (4-|4) |- (5-|4) |- (5-|5) |- (6-|5) |- (6-|6) |- (7-|6) |- (7-|7) |- (8-|7) |- (8-|1) |- cycle;
    \Body
    0 & -\varepsilon_1\!+\!\varepsilon_2 & -\varepsilon_2 \!+\!\varepsilon_3 & -\varepsilon_3\!+\!\varepsilon_4 & -\varepsilon_4\!+\!\varepsilon_5 & \cdots & -2 \varepsilon_n \\
    2\varepsilon_1 & 0 & \varepsilon_1\!-\!\varepsilon_2 & \varepsilon_1\!-\!\varepsilon_3 & \varepsilon_1\!-\!\varepsilon_4 & \cdots & \varepsilon_1\!-\!\varepsilon_n \\
    \varepsilon_1\!+\!\varepsilon_2  & 2\varepsilon_2 & 0 & \varepsilon_2\!-\!\varepsilon_3 & \varepsilon_2\!-\!\varepsilon_4 & \cdots & \varepsilon_2\!-\!\varepsilon_n \\
    \varepsilon_1\!+\!\varepsilon_3  & \varepsilon_2\!+\!\varepsilon_3 & 2\varepsilon_3 & 0 & \varepsilon_3\!-\!\varepsilon_4 & \cdots & \varepsilon_3\!-\!\varepsilon_n \\
    \vdots & \vdots & \vdots & \ddots & \ddots & \ddots & \vdots \\
    \varepsilon_1\!+\!\varepsilon_{n-1} & \varepsilon_2\!+\!\varepsilon_{n-1} & \varepsilon_3\!+\!\varepsilon_{n-1}  & \cdots & 2\varepsilon_{n-1} & 0 & \varepsilon_{n-1}\!-\!\varepsilon_n \\
    \varepsilon_1\!+\!\varepsilon_n & \varepsilon_2\!+\!\varepsilon_n & \varepsilon_3\!+\!\varepsilon_n  & \cdots & \varepsilon_{n-1}\!+\!\varepsilon_n & 2\varepsilon_n & 0  
\end{pNiceArray}.
$
}
\tag{$\dagger$}
\end{align}

\begin{Lem} \label{Cor:tauOrb}
    The $\tau$-orbits in $\Phi_{\ge -1}$ are given by
    \begin{align*}
        & -\varepsilon_1+\varepsilon_2 \taumapsto  \varepsilon_1-\varepsilon_2 \taumapsto \varepsilon_2-\varepsilon_3 \taumapsto \varepsilon_3-\varepsilon_4 \taumapsto \cdots \taumapsto \varepsilon_{n-1} - \varepsilon_n \taumapsto \varepsilon_n+\varepsilon_1 , \\
        &-\varepsilon_2+\varepsilon_3 \taumapsto  \varepsilon_1-\varepsilon_3 \taumapsto \varepsilon_2-\varepsilon_4 \taumapsto \varepsilon_3-\varepsilon_5 \taumapsto \cdots \taumapsto \varepsilon_{n-1}+\varepsilon_1 \taumapsto \varepsilon_n+\varepsilon_2 , \\
        & \hspace{15.5em} \vdots \\
        &-\varepsilon_{n-1}+\varepsilon_n \taumapsto  \varepsilon_1-\varepsilon_n \taumapsto \varepsilon_2+\varepsilon_1 \taumapsto \varepsilon_3+\varepsilon_2 \taumapsto \cdots \taumapsto \varepsilon_{n-1}+\varepsilon_{n-2} \taumapsto \varepsilon_n+\varepsilon_{n-1} , \\
        & -2\varepsilon_n \taumapsto  2\varepsilon_1 \taumapsto 2\varepsilon_2 \taumapsto 2\varepsilon_3 \taumapsto \cdots \taumapsto 2\varepsilon_{n-1} \taumapsto 2\varepsilon_n.
    \end{align*}
\end{Lem}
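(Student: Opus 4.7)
The plan is to compute the Coxeter element $c = s_1 s_2 \cdots s_n$ explicitly on standard basis vectors, identify the few almost positive roots on which $\tau$ differs from $c$, and then trace each orbit, verifying closure by a total count.

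First, I would show that $c$ acts on the standard basis as a $2n$-cycle. Applying the reflections in $c(\varepsilon_j)$ right-to-left, for $j < n$ all of $s_n, \ldots, s_{j+1}$ fix $\varepsilon_j$, the reflection $s_j$ swaps it with $\varepsilon_{j+1}$, and $s_{j-1}, \ldots, s_1$ fix $\varepsilon_{j+1}$; hence $c(\varepsilon_j) = \varepsilon_{j+1}$. For $\varepsilon_n$, the reflection $s_n$ flips it to $-\varepsilon_n$ and then $s_{n-1}, \ldots, s_1$ drag $-\varepsilon_n$ successively through $-\varepsilon_{n-1}, \ldots, -\varepsilon_1$, giving $c(\varepsilon_n) = -\varepsilon_1$. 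Next, I would pin down the exceptional elements on which $\tau \ne c$. From the piecewise definition, these come in two families: (i) each negative simple root $-\alpha_i$, on which a short computation gives $\tau(-\alpha_i) = s_1 \cdots s_{i-1}(\alpha_i) = \varepsilon_1 - \varepsilon_{i+1}$ for $i \le n-1$ and $\tau(-\alpha_n) = 2\varepsilon_1$; and (ii) each root of the form $s_n \cdots s_{i+1}(\alpha_i)$, which a parallel computation identifies as $\varepsilon_i + \varepsilon_n$ for $i \le n-1$ and $2\varepsilon_n$ for $i = n$, sent by $\tau$ to the corresponding $-\alpha_i$. On every other almost positive root, $\tau = c$.

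Then I would trace each orbit. For $k \in [1, n-1]$, start at $-\alpha_k$; $\tau$ sends it to $\varepsilon_1 - \varepsilon_{k+1}$; then $c$ shifts both indices by one at each step, producing $\varepsilon_j - \varepsilon_{j+k}$ for $j = 1, \ldots, n-k$; the next $c$-step turns $\varepsilon_{n-k} - \varepsilon_n$ into $\varepsilon_{n-k+1} + \varepsilon_1$ (since $c(-\varepsilon_n) = \varepsilon_1$); further $c$-steps advance this to $\varepsilon_{n-k+1+m} + \varepsilon_{1+m}$ for $m = 0, \ldots, k-1$, ending at $\varepsilon_n + \varepsilon_k$, which lies in family (ii) with $i = k$, so $\tau$ returns to $-\alpha_k$ and closes the loop. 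One checks along the way that no intermediate root coincides with an exceptional root: the differences $\varepsilon_j - \varepsilon_{j+k}$ are never of the form $\varepsilon_i + \varepsilon_n$, and the intermediate positive sums $\varepsilon_{n-k+1+m} + \varepsilon_{1+m}$ with $m \le k-2$ have both indices strictly less than $n$. The orbit of $-\alpha_n = -2\varepsilon_n$ is handled by the same mechanism, yielding $-2\varepsilon_n \to 2\varepsilon_1 \to 2\varepsilon_2 \to \cdots \to 2\varepsilon_n \to -2\varepsilon_n$. Each of the $n$ listed orbits has $n+1$ elements, totalling $n(n+1) = |\Phi^{C_n}_{\ge -1}|$, so every almost positive root appears in exactly one orbit.

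The main obstacle is combinatorial bookkeeping: verifying that an iterated application of $c$ never accidentally lands on an exceptional root of type (ii) before the designated final step of each orbit. Once the count $n(n+1)$ matches, there is no room for a missed orbit.
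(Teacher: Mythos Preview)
Your proposal is correct and follows essentially the same approach as the paper: compute the action of $c$ on the $\varepsilon_i$, identify the exceptional inputs $-\alpha_i$ and $s_n\cdots s_{i+1}(\alpha_i)=\varepsilon_i+\varepsilon_n$ (resp.\ $2\varepsilon_n$) on which $\tau\neq c$, and then read off the orbits. Your version is a bit more explicit than the paper's---you spell out $\tau(-\alpha_i)$, check that intermediate roots avoid the exceptional set, and close with the cardinality count $n(n+1)=|\Phi^{C_n}_{\ge -1}|$---but the underlying argument is the same.
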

\begin{proof}
We first compute
    \[
        c(\varepsilon_1) = \varepsilon_2, ~ c(\varepsilon_2) = \varepsilon_3, ~ \ldots, c(\varepsilon_{n-1}) = \varepsilon_n, ~ c(\varepsilon_n) = - \varepsilon_1.
    \]
Repeatedly applying the Coxeter element $c$, we get
    \begin{align*}
        & \varepsilon_1-\varepsilon_2 \cmapsto \varepsilon_2-\varepsilon_3 \cmapsto \varepsilon_3-\varepsilon_4 \cmapsto \cdots \cmapsto \varepsilon_{n-2}-\varepsilon_{n-1} \cmapsto \varepsilon_{n-1}-\varepsilon_n \cmapsto \varepsilon_n+\varepsilon_1, \\
        & \varepsilon_1-\varepsilon_3 \cmapsto \varepsilon_2-\varepsilon_4 \cmapsto \varepsilon_3-\varepsilon_5 \cmapsto \cdots \cmapsto \varepsilon_{n-2}-\varepsilon_n \cmapsto \varepsilon_{n-1}+\varepsilon_1 \cmapsto \varepsilon_n+\varepsilon_2, \\
        & \hspace{15.5em}\vdots \\
        & \varepsilon_1-\varepsilon_n \cmapsto \varepsilon_2+\varepsilon_1 \cmapsto \varepsilon_3+\varepsilon_2 \cmapsto \cdots \cmapsto  \varepsilon_{n-2}+\varepsilon_{n-3} \cmapsto \varepsilon_{n-1}+\varepsilon_{n-2} \cmapsto \varepsilon_n+\varepsilon_{n-1}, \\
        & 2\varepsilon_1 \cmapsto 2\varepsilon_2 \cmapsto 2\varepsilon_3 \cmapsto \cdots \cmapsto 2\varepsilon_{n-2} \cmapsto 2\varepsilon_{n-1} \cmapsto 2\varepsilon_n. 
    \end{align*}
    Observe that
\begin{align*}
    \{s_n s_{n-1} \cdots s_{i+1} (\alpha_i): i \in [1,n]\} = \{2\varepsilon_n, \varepsilon_{n-1}+\varepsilon_n, \varepsilon_{n-2}+\varepsilon_n, \ldots, \varepsilon_1+\varepsilon_n\},
\end{align*}
So
\begin{align*}
    \tau (\varepsilon_i+\varepsilon_n) =
    \begin{cases}
        \varepsilon_{i+1}-\varepsilon_i ~ & \text{if} ~ i \in [1,n-1] \\
        -2\varepsilon_n ~ & \text{if} ~ i = n.
    \end{cases}
\end{align*}
The statement follows.
\qedhere
\end{proof}

\begin{rem}
In terms of the bijection (\ref{Eqn:arrTypeA}), the $\tau$-orbits in $\Phi_{\ge -1}$ are precisely the ``slope $(-1)$ curves on the torus''.  More specifically, $\tau$ sends the almost positive root in the $(i,j)$-entry of (\ref{Eqn:arrTypeA}) to the $(k,l)$-entry of (\ref{Eqn:arrTypeA}) if and only if $$k \equiv i+1 ~ (\text{mod} ~ n+1) \quad \text{and} \quad l \equiv j+1 ~ (\text{mod} ~ n+1).$$
\end{rem}

\begin{Def} Following \cite{CP}, let
$
    (-|\!|_{\underline c}-): \Phi_{\ge -1} \times \Phi_{\ge -1} \rightarrow \ZZ
$
be the unique $\tau$-invariant function such that 
\begin{enumerate}[label=(\roman*),nolistsep]
    \item For any $\alpha, \alpha' \in \Pi$, we have $(-\alpha |\!|_{\underline c} -\alpha') = 0$;
    \item For any $\alpha \in \Pi$ and $\beta \in \Phi_+$,
    \[
    (-\alpha |\!|_{\underline c} \beta) = [\beta:\alpha], \quad \text{where } \beta = \sum_{\alpha'' \in \Pi} [\beta:\alpha''] \alpha''.
    \]
\end{enumerate}
For $\gamma,\gamma'\in\Phi_{\geq-1}$, the number $(\gamma |\!|_{\underline c} \gamma')$ is called the \emph{$\underline{c}$-compatibility degree} of $\gamma$ and $\gamma'$.  We say that $\gamma$ and $\gamma'$ are \emph{${\underline c}$-compatible} if $(\gamma_1|\!|_{\underline c}\gamma_2)=0$.
\end{Def}
\begin{Ex} For the root system $\Phi^{C_2}$ of type $C_2$ the $\underline{c}$-compatibility degrees are given in the following table:
    \begin{center}
\begin{tabular}{|c|c|c|c|c|c|c|}
\hline
        & $-\alpha_1$ & $-\alpha_2$ & $\alpha_1$& $\alpha_2$ & $\alpha_1+\alpha_2$ & $2\alpha_1+\alpha_2$ \\ \hline

$-\alpha_1$          & $0$ & $0$ & $1$ & $0$ & $1$ & $2$\\ \hline
$-\alpha_2$          & $0$ & $0$ & $0$ & $1$ & $1$ & $1$\\ \hline        
$\alpha_1$           & $1$ & $0$ & $0$ & $2$ & $1$ & $0$\\ \hline
$\alpha_2$           & $0$ & $1$ & $1$ & $0$ & $0$ & $1$\\ \hline
$\alpha_1+\alpha_2$  & $1$ & $2$ & $1$ & $0$ & $0$ & $0$\\ \hline
$2\alpha_1+\alpha_2$ & $1$ & $1$ & $0$ & $1$ & $0$ & $0$\\ \hline
\end{tabular}
\end{center}
So indeed ``$\underline{c}$-compatibility'' generalizes the notion of ``adjacency''.
\end{Ex}

For integers $1\leq k<m \leq n+1$ and $ 1\leq i<j \leq n+1$, we use $(k,m\mid i,j)$ to denote the two-by-two submatrix
$$
    \setlength\arraycolsep{1.3pt}\begin{pmatrix}
        x_{ki} && x_{kj}\\
        x_{mi} && x_{mj} 
    \end{pmatrix}
$$
of a generic $(n+1)$-by-$(n+1)$ matrix.  The main theorem of this section is the following result.
\begin{Thm}\label{compatible}
Two almost positive roots $\alpha,\beta\in\Phi_{\geq-1}$ are $\underline c$-compatible if and only if, under the bijection (\ref{Eqn:arrTypeA}), the locations of $\alpha$ and $\beta$ satisfy one of the following conditions:
\begin{enumerate}[label=(\roman*),nolistsep]
    \item They are in the same row or the same column.
    \item They are in the $(k,i)$- and $(m,j)$-entries of a two-by-two submatrix $(k,m\mid i,j)$ disjoint from the diagonal, with ($k<i<m$) or ($i<k<m<j$).
    \item They are in the $(m,i)$- and $(k,j)$-entries of a two-by-two submatrix $(k,m\mid i,j)$ disjoint from the diagonal, with ($k<i$ and $m<i$) or ($i<k<j<m$) or ($k>j$).
\end{enumerate}
\end{Thm}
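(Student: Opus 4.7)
The strategy exploits the $\tau$-invariance of the compatibility degree. By Lemma \ref{Cor:tauOrb} and the remark following it, $\tau$ acts on the off-diagonal positions of the array (\ref{Eqn:arrTypeA}) by the diagonal cyclic shift $\sigma: (r,c) \mapsto (r+1, c+1) \bmod (n+1)$, and each $\tau$-orbit meets the top row of (\ref{Eqn:arrTypeA}) in exactly one entry, which is a negative simple root. Applying a suitable power of $\tau$ to any pair of almost positive roots, we may therefore assume that one of the two roots is a negative simple root $-\alpha_k$ sitting at position $(1, k+1)$.

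Under this normalization, the defining properties of the compatibility degree give $(-\alpha_k |\!|_{\underline c}\, \beta) = [\beta:\alpha_k]$ when $\beta$ is positive, and $0$ when $\beta$ is a negative simple root. Using the expansions $\varepsilon_p - \varepsilon_q = \alpha_p + \cdots + \alpha_{q-1}$ for $p<q$, and $\varepsilon_p + \varepsilon_q = \alpha_p + \cdots + \alpha_{q-1} + 2\alpha_q + \cdots + 2\alpha_{n-1} + \alpha_n$ for $p \leq q$, one reads off that $[\beta:\alpha_k] = 0$ iff either $\beta = \varepsilon_p - \varepsilon_q$ with $k<p$ or $k \geq q$, or $\beta = \varepsilon_p + \varepsilon_q$ with $k<p$. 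Direct inspection of (\ref{Eqn:arrTypeA}) shows that the resulting list of positions $(r,c)$ compatible with $(1, k+1)$ coincides exactly with the positions that conditions (i)--(iii) prescribe when one of the two entries is $(1, k+1)$, which settles the equivalence whenever one root lies in the top row.

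To extend the equivalence to arbitrary pairs, we verify that the union of the three conditions (i), (ii), (iii) is preserved by the cyclic shift $\sigma$. Condition (i) (same row or same column) is manifestly $\sigma$-invariant. For (ii) and (iii), the subtlety is that the linear inequalities $k<i<m$, $i<k<m<j$, $m<i$, $i<k<j<m$, $k>j$ can rearrange when the shift wraps an index from $n+1$ back to $1$. The plan is to analyze these wraparound cases and to show that a pair satisfying (ii) is sent either to another instance of (ii) or to an instance of (iii), and symmetrically for (iii); transposition of the array interchanges (ii) and (iii), which already hints at this joint closure. A cleaner alternative is to recast (ii)--(iii) as a single statement about the cyclic configuration of the four indices on $\mathbb{Z}/(n+1)$, at which point $\sigma$-invariance becomes transparent. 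The main obstacle is precisely this last step: conditions (ii) and (iii) are stated in linear-order language while the natural symmetry is cyclic, and the wraparound bookkeeping has several subcases according to which of the four indices crosses $n+1$. Once $\sigma$-invariance is established, it combines with the $\tau$-invariance of the compatibility degree and the top-row computation to yield the theorem for all pairs of almost positive roots.
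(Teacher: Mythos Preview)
Your proposal is correct and follows essentially the same route as the paper: both reduce via $\tau$-invariance to the case where one index equals $1$ (equivalently, one root sits in the top row of (\ref{Eqn:arrTypeA})), compute compatibility there using $(-\alpha_k |\!|_{\underline c}\,\beta)=[\beta:\alpha_k]$, and then transport the answer back. The $\sigma$-invariance verification you flag as the ``main obstacle'' is exactly the content of the paper's five-case enumeration at the end of the proof, where the authors track how the submatrix $(k,m\mid i,j)$ transforms under $\tau^{-(k-1)}$; so the missing step in your plan is precisely what the paper carries out explicitly rather than anything conceptually new.
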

For simplicity, from now on, the term ``locations of almost positive roots'' should be understood under the bijection ($\dagger$).  First we prove several lemmas.
\begin{Lem}\label{samerowcolumn}
    Every pair of almost positive roots in the same row or column of the matrix (\ref{Eqn:arrTypeA}) are $\underline c$-compatible.
\end{Lem}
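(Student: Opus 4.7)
The plan is to exploit the $\tau$-invariance of the $\underline c$-compatibility degree together with the Remark, which identifies $\tau$ with the cyclic shift $(i,j) \mapsto (i+1, j+1) \bmod (n+1)$ on the display (\ref{Eqn:arrTypeA}). Since this shift preserves the property of lying in a common row or a common column, a suitable power of $\tau$ will move the given pair of almost positive roots into a convenient position---specifically, one in which at least one of them occupies row $0$ and thereby becomes a negative simple root $-\alpha_i$. Once such a reduction is made, the defining properties (i) and (ii) of $(-|\!|_{\underline c}-)$ become directly applicable.

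For the row case, if $\alpha$ and $\beta$ both lie in row $r$ of (\ref{Eqn:arrTypeA}), I will apply $\tau^{n+1-r}$ to push both into row $0$. Their original columns $c_1 \neq c_2$ are each distinct from $r$ (as neither root is zero), so after shifting, the columns $c_i - r \bmod (n+1)$ are distinct elements of $[1,n]$; hence $\alpha$ and $\beta$ become two distinct negative simple roots. Part (i) of the definition of the compatibility degree then immediately yields $(\alpha |\!|_{\underline c} \beta) = 0$.

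For the column case, suppose $\alpha$ sits at $(r_1, c)$ and $\beta$ at $(r_2, c)$ with $r_1 \neq r_2$ and both distinct from $c$. I will apply $\tau^{n+1-r_1}$, which sends $\alpha$ to $(0, c')$ with $c' := c - r_1 \bmod (n+1) \in [1,n]$---so $\alpha$ becomes $-\alpha_{c'}$---while $\beta$ moves to $(r', c')$ with $r' := r_2 - r_1 \bmod (n+1) \in [1,n] \setminus \{c'\}$. By part (ii) of the definition, it then suffices to show $[\beta : \alpha_{c'}] = 0$. Reading the entry of (\ref{Eqn:arrTypeA}) at $(r', c')$: it is $\varepsilon_{r'} - \varepsilon_{c'}$ when $r' < c'$, and $\varepsilon_{c'+1} + \varepsilon_{r'}$ (which degenerates to $2\varepsilon_{r'}$ in the subcase $c' = r'-1$) when $r' > c'$. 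Expanding in the simple-root basis via $\varepsilon_i - \varepsilon_j = \alpha_i + \cdots + \alpha_{j-1}$ for $i < j$ and $\varepsilon_i + \varepsilon_j = \alpha_i + \cdots + \alpha_{j-1} + 2\alpha_j + \cdots + 2\alpha_{n-1} + \alpha_n$ for $i \leq j$, one checks that $\alpha_{c'}$ never appears. The main (mild) obstacle is the bookkeeping across the three subcases $r' < c'$, $r' > c' + 1$, and $c' = r' - 1$, but each is routine once the entries of (\ref{Eqn:arrTypeA}) and the $\varepsilon$-to-$\alpha$ conversion are spelled out.
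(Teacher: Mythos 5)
Your proof is correct and takes essentially the same route as the paper: use $\tau$-invariance (via the cyclic-shift description of $\tau$ on the array) to move the pair into the first row, then invoke property (i) for the row case and property (ii) together with the observation $[\beta:\alpha_{c'}]=0$ for the column case. The only difference is that you spell out the $\varepsilon$-to-$\alpha$ expansion that the paper dismisses as "easily done by inspection," and your bookkeeping there checks out.
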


\begin{proof}
    Suppose that $\alpha, \beta \in \Phi_{\ge -1}$ are in the same row of (\ref{Eqn:arrTypeA}).  By the description of the map $\tau$ in Lemma \ref{Cor:tauOrb}, we see that $\tau(\alpha), \tau(\beta)$ are also in the same row of (\ref{Eqn:arrTypeA}).  Hence, by $\tau$-invariance of the $\underline c$-compatibility degree, we may assume without loss of generality that $\alpha$ and $\beta$ are in the first row of (\ref{Eqn:arrTypeA}).  Since all roots in the first row of (\ref{Eqn:arrTypeA}) are negative simple roots, $\alpha$ and $\beta$ are $\underline c$-compatible.

    Suppose that $\alpha, \beta \in \Phi_{\ge -1}$ are in the same column of (\ref{Eqn:arrTypeA}).  By a similar argument to the previous paragraph, we may assume without loss of generality that one of $\alpha, \beta$ is in the first row of (\ref{Eqn:arrTypeA}).  Hence, it suffices to show that, for all $i \in [1,n]$, $$[\gamma:\alpha_i] = 0$$ for each $\gamma$ which is in the same column of (\ref{Eqn:arrTypeA}) as $-\alpha_i$.  But this is easily done by inspection.
\end{proof}

\begin{Lem}\label{onezeroentry}
    For any distinct $i,j,k \in [1,n+1]$, the almost positive roots in the $(i,j)$- and $(j,k)$-entries of (\ref{Eqn:arrTypeA}) are not $\underline c$-compatible.
\end{Lem}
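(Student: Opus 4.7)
The plan is to use the $\tau$-invariance of the compatibility degree, together with the description of $\tau$ on the array (\ref{Eqn:arrTypeA}) from the preceding remark, to reduce to a single convenient normalization. Since $\tau$ shifts matrix positions by $(+1,+1)$ mod $n+1$, applying $\tau^{1-i}$ sends $(i,j)$ to $(1,j-i+1)$ and $(j,k)$ to $(j-i+1,k-i+1)$, with indices read mod $n+1$ in $[1,n+1]$. Because the original $i,j,k$ are pairwise distinct, the shifted triple $1,j-i+1,k-i+1$ is also pairwise distinct, so after relabeling we may assume $i=1$ with $j,k\in[2,n+1]$ and $j\ne k$. In this normalization the root at $(1,j)$ is the negative simple root $-\alpha_{j-1}$, so the definition of the compatibility degree gives $(-\alpha_{j-1}\,|\!|_{\underline c}\,\beta)=[\beta:\alpha_{j-1}]$, where $\beta$ is the positive root at position $(j,k)$. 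The lemma thus reduces to showing $[\beta:\alpha_{j-1}]>0$.

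I would split this into two cases. If $j<k$, then $\beta=\varepsilon_{j-1}-\varepsilon_{k-1}=\alpha_{j-1}+\alpha_{j}+\cdots+\alpha_{k-2}$, so $[\beta:\alpha_{j-1}]=1$. If $j>k$, then $\beta=\varepsilon_k+\varepsilon_{j-1}$, interpreted as $2\varepsilon_k$ when $k=j-1$. Using the type $C_n$ identities $\varepsilon_a-\varepsilon_b=\alpha_a+\cdots+\alpha_{b-1}$ and $2\varepsilon_a=2\alpha_a+2\alpha_{a+1}+\cdots+2\alpha_{n-1}+\alpha_n$ (with $2\varepsilon_n=\alpha_n$), a direct calculation by writing $\beta=(\varepsilon_k-\varepsilon_{j-1})+2\varepsilon_{j-1}$ (or $\beta=2\varepsilon_k$ when $k=j-1$) yields $[\beta:\alpha_{j-1}]=2$ when $j\le n$ and $[\beta:\alpha_{j-1}]=1$ when $j=n+1$. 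In every subcase the coefficient is strictly positive, so the two roots are not $\underline c$-compatible.

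The main obstacle I anticipate is the boundary subcase $j=n+1$, where $\alpha_{j-1}=\alpha_n$ is the long simple root and occurs in $2\varepsilon_a$ with coefficient $1$ rather than the coefficient $2$ that characterizes the ``doubled middle'' of $\varepsilon_a+\varepsilon_b$ for $b<n$. Some care is required to verify that at this boundary the coefficient genuinely remains nonzero rather than collapsing; apart from this small bookkeeping, everything is a routine verification inside the $C_n$ root system once the reduction to $i=1$ is in place.
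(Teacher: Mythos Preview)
Your argument is correct. You use exactly the same core idea as the paper---$\tau$-invariance to normalize, then read off the compatibility degree directly from the definition via $(-\alpha \,\|_{\underline c}\, \beta)=[\beta:\alpha]$---but you normalize differently: you set $i=1$, whereas the paper sets $j=1$. The paper's choice is slightly slicker because after reducing to $j=1$ the two entries lie in the first \emph{column} and the first \emph{row}; every positive root in the first column ($2\varepsilon_1,\,\varepsilon_1+\varepsilon_2,\,\ldots,\,\varepsilon_1+\varepsilon_n$) has full support in the simple roots, so the coefficient $[\beta:\alpha_{k-1}]$ is nonzero with no case analysis needed. Your normalization forces the split into $j<k$ versus $j>k$ and the boundary check at $j=n+1$, which you handle correctly but which the paper's choice sidesteps entirely.
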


\begin{proof}
    Again using the description of the map $\tau$ in Lemma \ref{Cor:tauOrb}, we may assume without loss of generality that $j = 1$.  Since for each almost positive root $\beta$ in the first column of (\ref{Eqn:arrTypeA}), we have
    \[
    [\beta : \alpha] \neq 0 \quad \forall \alpha \in \Pi,
    \]
    and the first row of (\ref{Eqn:arrTypeA}) consists of negative simple roots, an almost positive root in the first column of (\ref{Eqn:arrTypeA}) can never be $\underline c$-compatible with an almost positive root in the first row of (\ref{Eqn:arrTypeA}).
\end{proof}

\begin{Lem}\label{onlyonediag}
    Let $i,j,k,l \in [1,n+1]$ be distinct.  Assume that $i < k$ and $j < l$.  Then
    \begin{align*}
        & \text{The} ~ (i,j) \text{-entry of (\ref{Eqn:arrTypeA}) is} ~ \underline c ~ \text{-compatible with the} ~ (k,l) \text{-entry of (\ref{Eqn:arrTypeA})} \\
        \iff & \text{The} ~ (i,l) \text{-entry of (\ref{Eqn:arrTypeA}) is not} ~ \underline c ~ \text{-compatible with the} ~ (k,j) \text{-entry of (\ref{Eqn:arrTypeA})}.
    \end{align*}
\end{Lem}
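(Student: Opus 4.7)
The plan is to invoke the $\tau$-invariance of the compatibility degree to reduce to the case $i = 1$, and then to compute directly from the explicit description of roots in the array (\ref{Eqn:arrTypeA}). By the remark following Lemma \ref{Cor:tauOrb}, $\tau$ acts on positions of (\ref{Eqn:arrTypeA}) by the simultaneous cyclic shift $(a,b) \mapsto (a+1,b+1) \pmod{n+1}$. Applying $\tau^{n+2-i}$ to all four positions $(i,j), (i,l), (k,j), (k,l)$ carries row $i$ to row $1$, and the two pairs of interest become $\{(1,j'),(k',l')\}$ and $\{(1,l'),(k',j')\}$ for pairwise distinct $j',l',k' \in [2,n+1]$. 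Since the conclusion of the lemma is symmetric under $j' \leftrightarrow l'$ (which merely exchanges the two pairs), we may assume without loss of generality that $i = 1$.

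Under this reduction, the entry $(1,m)$ for $m \in [2, n+1]$ is the negative simple root $-\alpha_{m-1}$, and by the defining properties of the compatibility degree, $(-\alpha_s |\!|_{\underline c} \beta) = [\beta : \alpha_s]$ for any positive root $\beta$. Writing $\gamma_{a,b}$ for the root at position $(a,b)$ of (\ref{Eqn:arrTypeA}), the lemma thus reduces to showing that exactly one of $[\gamma_{k,l} : \alpha_{j-1}]$ and $[\gamma_{k,j} : \alpha_{l-1}]$ vanishes. From (\ref{Eqn:arrTypeA}), for $k \geq 2$ and any column $m \neq k$, the root $\gamma_{k,m}$ equals $\varepsilon_m + \varepsilon_{k-1}$ if $m < k-1$, $2\varepsilon_{k-1}$ if $m = k-1$, and $\varepsilon_{k-1} - \varepsilon_{m-1}$ if $m > k$. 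Combined with the standard expansions $\varepsilon_a - \varepsilon_b = \sum_{s=a}^{b-1} \alpha_s$, $\varepsilon_a + \varepsilon_b = \sum_{s=a}^{b-1} \alpha_s + 2\sum_{s=b}^{n-1}\alpha_s + \alpha_n$ (for $a < b$), and $2\varepsilon_a = 2\sum_{s=a}^{n-1} \alpha_s + \alpha_n$, these yield explicit formulas for both coefficients.

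The proof concludes with a case analysis on the relative position of $k$ with respect to $\{j,l\}$, assuming $j < l$: (a) $k < j < l$, (b) $j < k < l$, and (c) $j < l < k$. In case (a), both $\gamma_{k,j}$ and $\gamma_{k,l}$ have the form $\varepsilon_{k-1} - \varepsilon_{m-1}$, and direct inspection yields $[\gamma_{k,l} : \alpha_{j-1}] = 1$ while $[\gamma_{k,j} : \alpha_{l-1}] = 0$. In case (b), $\gamma_{k,l} = \varepsilon_{k-1} - \varepsilon_{l-1}$ contributes no $\alpha_{j-1}$ (since $j-1 < k-1$), whereas $\gamma_{k,j} \in \{\varepsilon_j + \varepsilon_{k-1},\, 2\varepsilon_{k-1}\}$ always has nonzero coefficient on $\alpha_{l-1}$; case (c) is handled symmetrically with the roles of $j$ and $l$ reversed. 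The main obstacle is the careful handling of boundary sub-cases---specifically when one column equals $k - 1$ (so that the root becomes $2\varepsilon_{k-1}$) or equals $n + 1$ (so that $\alpha_n$ appears with coefficient $1$ rather than $2$)---but in each instance the non-vanishing claim follows by direct inspection of the coefficient formulas above.
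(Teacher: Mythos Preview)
Your argument is correct and follows essentially the same approach as the paper: both reduce via $\tau$-invariance so that one of the two row indices becomes $1$ (you set $i=1$, the paper sets $k=1$, a purely cosmetic difference), and then use the formula $(-\alpha_p\,\|_{\underline c}\,\beta)=[\beta:\alpha_p]$ to decide compatibility. The paper summarizes the resulting dichotomy with a single picture of the region not compatible with $-\alpha_p$, whereas you carry out the equivalent explicit coefficient computation case by case; the content is the same.
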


\begin{proof}
    As above, we may assume without loss of generality that $k = 1$.

    For $p \in [1,n]$, those almost positive roots that are not $\underline c$-compatible with $- \alpha_p$ are depicted below:
    \begin{align*}
    \NiceMatrixOptions{margin=4pt,renew-matrix}
        \begin{pNiceArray}{ccccccccc}
            \CodeBefore
            \tikz \draw [fill=red!40!black] (2-|7) |- (2-|10) |- (7-|10) |- (7-|7) |- cycle;
            % \tikz \draw [fill=brown!15] (1-|10) |- (1-|11) |- (10-|11) |- (10-|10) |- cycle;
            \tikz \draw [fill=red!40!black] (2-|1) |- (2-|2) |- (3-|2) |- (3-|3) |- (4-|3) |- (4-|4) |- (5-|5) |- (6-|5) |- (6-|6) |- (10-|6) |- (10-|1) |- cycle;
            \Body
            0 &&&&& -\alpha_p &&& \\
            & 0 &&&&&&& \\
            && 0 &&&&&& \\
            &&& 0 &&&&& \\
            &&&& \ddots &&&& \\
            &&&&& 0 & \textcolor{white}{\alpha_p} & \\
            &&&&&& 0 && \\
            &&&&&&& 0 & \\
            &&&&&&&& \ddots
        \end{pNiceArray}\tag{$*$}
    \end{align*}
    The assertion is clear from this picture.
\end{proof}

\begin{proof}[Proof of Theorem \ref{compatible}]
Let $\alpha, \beta \in \Phi_{\ge -1}$. If $\alpha,\beta$ are in the same row or column, then by Lemma \ref{samerowcolumn} they are $\underline{c}$-compatible. If $\alpha,\beta$ are in the $(m,i)$- and $(i,j)$-entries for some $m,i,j$, then by Lemma \ref{onezeroentry}, $\alpha$ and $\beta$ are not $\underline{c}$-compatible.

From the discussion in the previous paragraph, we may now assume that the locations of $\alpha, \beta$ are in a two-by-two submatrix $(k,m\mid i,j)$, where $k,m,i,j$ are distinct (and $k<m$ and $i<j$).
%From the discussion in the previous paragraph, we may assume $k\neq i, k\neq j, m\neq i, m\neq j$.
First observe that if $k=1$, then we reduce the question of $\underline{c}$-compatibility of $\alpha$ and $\beta$ back to $(*)$ above, i.e. if $m<i$, then the almost positive roots in the $(m,i)$- and $(k,j)$-entries are $\underline{c}$-compatible, and the almost positive roots in the $(m,j)$- and $(k,i)$-entries are not; otherwise those in the $(k,i)$- and $(m,j)$-entries are $\underline{c}$-compatible, and those in the $(k,j)$- and $(m,i)$-entries are not.

For general $k$, we apply $\tau^{-1}$ to the $(n+1)$-by-$(n+1)$ matrix $(k-1)$ times, then $(k,m\mid i,j)$ becomes $(k',m'\mid i',j')$ where $k'=1,\,m'=m-k+1$ and $i',j'$ are computed case by case as follows.
\begin{enumerate}[label=(\roman*)]
    \item If $k<i$, then $i'=i-k+1$ and $j'=j-k+1$. %(i.e. ``No Flip")
    \item If $i<k<j$, then $i'=j-k+1$ and $j'=(i+n+1)-k+1$. %(i.e. ``Flip Once")
    \item If $k>j$, then $i'=(i+n+1)-k+1$ and $j'=(j+n+1)-k+1$. %(i.e. ``Flip Twice")
\end{enumerate}
Since $(-|\!|_{\underline{c}}-)$ is $\tau$-invariant, we have reduced to the case of $k=1$ above. Explicitly we have:
\begin{enumerate}[label=(\roman*)]
    \item If $k<i$ and $m<i$, then the $(m,i)$- and $(k,j)$-entries are $\underline{c}$-compatible, and the $(m,j)$- and $(k,i)$-entries are not $\underline{c}$-compatible.
    \item If $k<i$ and $m>i$, then the $(k,i)$- and $(m,j)$-entries are $\underline{c}$-compatible, and the $(k,j)$- and $(m,i)$-entries are not $\underline{c}$-compatible.
    \item If $i<k<j$ and $m<j$, then the $(k,i)$- and $(m,j)$-entries are $\underline{c}$-compatible, and the $(k,j)$- and $(m,i)$-entries are not $\underline{c}$-compatible.
    \item If $i<k<j$ and $m>j$, then the $(m,i)$- and $(k,j)$-entries are $\underline{c}$-compatible, and the $(m,j)$- and $(k,i)$-entries are not $\underline{c}$-compatible.
    \item If $k>j$, since $m<i+n+1$ always holds, then the $(m,i)$- and $(k,j)$-entries are $\underline{c}$-compatible, and the $(m,j)$- and $(k,i)$-entries are not $\underline{c}$-compatible.
\end{enumerate}
And our statement follows.
\qedhere
\end{proof}

\section{A Gr\"obner Basis Result for \texorpdfstring{$m$}{m}-by-\texorpdfstring{$n$}{n} Matrices with Prescribed Zero Entries}
To study the coordinate ring $\CC[\Ominnn]$, we employ techniques from computational algebraic geometry.  We refer to \cite{CLO} for more details of the definitions and results cited below. Let $R = \CC[x_1, \dots, x_N]$ be a polynomial ring.

\begin{Def}
A \emph{monomial order} on $R$ is a total order $>$ on the set of monomials such that:
\begin{enumerate}[label=(\roman*)]
\item $m \geq 1$ for all monomials $m$;
\item If $m_1 > m_2$, then $mm_1 > mm_2$ for any monomial $m$.
\end{enumerate}
\end{Def}

An important example of a monomial order is the
\emph{degree reverse lexicographic order} (degree revlex order): $$x_1^{a_1}\cdots x_N^{a_N} > x_1^{b_1}\cdots x_N^{b_N}$$ if $\sum a_i > \sum b_i$, or $\sum a_i = \sum b_i$ and the rightmost nonzero entry of $(a_1-b_1, \dots, a_N-b_N)$ is negative.

\begin{Def}
For a nonzero polynomial $f = \sum c_\gamma x^\gamma \in R$ and a monomial order $>$, the \emph{initial monomial} $\init_>(f)$ is the largest monomial $x^\gamma$ with $c_\gamma \neq 0$. For an ideal $I \subset R$, the \emph{initial ideal} is $\init_>(I) = \big( \init_>(f) : f \in I \big)$.
\end{Def}

\begin{Def}
A finite generating subset $G = \{g_1, \dots, g_k\}$ of an ideal $I \subset R$ is a \emph{Gr\"obner basis} for $I$ with respect to $>$ if $\init_>(I) = \big( \init_>(g_1), \dots, \init_>(g_k) \big)$.
\end{Def}

A key property of Gr\"obner bases and initial ideals that we will use is the following well-known theorem of Macaulay.

\begin{Thm}[Macaulay] \label{Thm:Macaulay}
If $I$ is a homogeneous ideal of $R$ and $>$ is a monomial order, then the Hilbert series of the homogeneous rings $R/I$ and $R/\init_>(I)$ coincide.
\end{Thm}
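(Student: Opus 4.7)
The plan is to exhibit a common $\CC$-vector space basis of $R/I$ and $R/\init_>(I)$ consisting of the \emph{standard monomials}, i.e., monomials of $R$ that do not lie in $\init_>(I)$. Since $I$ is homogeneous, so is $\init_>(I)$, and this basis is compatible with the grading, so the Hilbert series will agree in each degree.

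Because $\init_>(I)$ is generated by monomials, the images of the standard monomials manifestly form a graded $\CC$-basis of $R/\init_>(I)$. The work is therefore to show that these same monomials descend to a graded basis of $R/I$. For the spanning direction, given a nonzero $f \in R$, apply the following reduction: if $\init_>(f) \in \init_>(I)$, then there exists $g \in I$ with $\init_>(g)$ dividing $\init_>(f)$, say $\init_>(f) = x^\eta \init_>(g)$; subtracting an appropriate scalar multiple of $x^\eta g$ from $f$ produces $f'$ with $\init_>(f') < \init_>(f)$. Iterating peels off the leading term until one obtains a representative $r$ of $f \pmod I$ all of whose terms are standard monomials.

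For linear independence modulo $I$, suppose $r$ is a nonzero $\CC$-linear combination of standard monomials with $r \in I$. Then $\init_>(r) \in \init_>(I)$, but $\init_>(r)$ is one of the standard monomials appearing in $r$, which by definition is not in $\init_>(I)$, a contradiction. Combining the spanning and independence statements in each homogeneous degree $d$ gives $\dim_\CC (R/I)_d = \dim_\CC (R/\init_>(I))_d$ for every $d$, and the Hilbert series coincide.

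The main delicacy is the termination of the reduction procedure. In full generality this follows from the fact that any monomial order is a well-ordering on the set of monomials, so every strictly descending chain of initial monomials eventually terminates; in the homogeneous setting considered here one can bypass this by working inside the finite-dimensional graded piece $R_d$ of a fixed degree and choosing $g$ homogeneous of degree $d - |\eta|$, after which only finitely many monomials can ever be produced.
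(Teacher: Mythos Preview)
The paper does not prove this theorem; it is stated as a well-known result of Macaulay and invoked without proof. Your argument via standard monomials is the classical proof and is correct. One minor wording issue: the reduction step you describe only explicitly handles the case $\init_>(f)\in\init_>(I)$, whereas to arrive at a representative whose \emph{every} term is standard you must also set aside a standard leading term and recurse on the remainder---but this is clearly what you intend by ``peels off the leading term,'' and your closing remark about working inside the finite-dimensional graded piece $R_d$ already makes termination and the full reduction unproblematic.
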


For any positive integers $m,n$, let ${\rm Mat}_{m \times n}$ be the vector space of $m$ by $n$ matrices with complex entries.  Let $Z \subset [1,m] \times [1,n]$.  Define a closed subvariety
\[
{\rm Mat}_{m \times n}(Z) = \{ X = (x_{ij}) \in {\rm Mat}_{m \times n} \colon x_{ij} = 0 ~ \forall (i,j) \in Z \}
\]
of ${\rm Mat}_{m \times n}$, so the coordinate ring $\CC[{\rm Mat}_{m \times n}(Z)]$ of ${\rm Mat}_{m \times n}(Z)$ can be naturally identified with the polynomial ring $\CC[x_{ij} \colon (i,j) \in [1,m] \times [1,n] \setminus Z]$.  Let
\begin{align*}
    & \mathcal{G}_2(Z) = \{\text{two-by-two minors of a generic } m \times n \text{ matrix that belongs to } {\rm Mat}_{m \times n}(Z)\}, \\
    & I_2(Z) = \text{ideal of } \CC[{\rm Mat}_{m \times n}(Z)] \text{ generated by } \mathcal{G}_2(Z).
\end{align*}

\begin{Thm} \label{Thm:2by2Groebner}
    The set $\mathcal{G}_2(Z)$ is a universal degree revlex Gr\"obner basis of $I_2(Z)$, i.e., for any total order of the indeterminates of $\CC[{\rm Mat}_{m \times n}(Z)]$, $\mathcal{G}_2(Z)$ is a Gr\"obner basis of $I_2(Z)$ with respect to the degree revlex order induced by that total order.
\end{Thm}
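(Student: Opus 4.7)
The strategy is to verify Buchberger's criterion for $\mathcal{G}_2(Z)$ by lifting computations to the ambient polynomial ring and invoking the classical universal Gröbner basis theorem of Sturmfels for the $2 \times 2$ minors of a generic matrix. Elements of $\mathcal{G}_2(Z)$ come in two flavors: binomials $x_{ki}x_{mj} - x_{kj}x_{mi}$, which arise when all four entries of the submatrix $(k,m \mid i,j)$ lie outside $Z$, and pure monomials $\pm x_a x_b$, which arise when exactly one of the two diagonals of the submatrix survives the specialization $x_{ij} = 0$ for $(i,j) \in Z$. By Buchberger's product criterion, the S-polynomial $S(f,g)$ for $f, g \in \mathcal{G}_2(Z)$ reduces to zero whenever $\gcd(\init_>(f), \init_>(g)) = 1$, so the task reduces to pairs whose leading monomials share a variable.

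For such pairs, I would lift the analysis to the ambient ring $R = \CC[x_{ij} : (i,j) \in [1,m]\times[1,n]]$. Given any total order on the variables of $\CC[{\rm Mat}_{m \times n}(Z)]$, extend it to a total order on $R$ by declaring the $Z$-variables to be the smallest. Under the induced degree revlex order $>_R$ on $R$, the key observation is that for any $f \in \mathcal{G}_2(Z)$ lifted to the corresponding $2 \times 2$ minor $\tilde f \in R$, the leading monomial $\init_{>_R}(\tilde f)$ contains no $Z$-variable and coincides with $\init_>(f)$. This is because degree revlex with $Z$-variables placed last prefers monomials with fewer occurrences of the smallest variables, so the ``surviving'' diagonal is always selected as the leading term. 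By Sturmfels' theorem, for any two $2 \times 2$ minors $\tilde f, \tilde g \in R$ the S-polynomial $S(\tilde f, \tilde g)$ admits a standard representation $S(\tilde f, \tilde g) = \sum_i q_i \tilde g_i$ in $R$ with each $\tilde g_i$ a $2 \times 2$ minor of the generic matrix, each $q_i$ a scalar multiple of a monomial, and $\init_{>_R}(q_i \tilde g_i) \leq \operatorname{lcm}(\init_{>_R} \tilde f, \init_{>_R} \tilde g)$.

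The main obstacle is to show that projecting this identity to the quotient $\CC[{\rm Mat}_{m \times n}(Z)] = R / (x_{ij} : (i,j) \in Z)$ yields a valid standard representation of $S(f, g)$ in terms of $\mathcal{G}_2(Z)$. One must verify that (a) each image $\bar{\tilde g_i}$ is a nonzero element of $\mathcal{G}_2(Z)$, and (b) the leading-monomial inequality survives the projection. For (a), the crucial combinatorial observation is that in Sturmfels' Pl\"ucker-type reduction of $S(\tilde f, \tilde g)$, each auxiliary minor $\tilde g_i$ shares at least three of its four matrix entries with those of $\tilde f$ and $\tilde g$; since the shared entries are all outside $Z$, at most one entry of $\tilde g_i$ can lie in $Z$, so $\bar{\tilde g_i}$ is either a surviving binomial or a surviving monomial in $\mathcal{G}_2(Z)$, never zero. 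For (b), since all leading monomials under $>_R$ avoid $Z$-variables by the previous observation, the inequality transfers unchanged to $\CC[{\rm Mat}_{m \times n}(Z)]$. A case analysis of the Pl\"ucker identities according to whether $\tilde f$ and $\tilde g$ share one or two matrix entries (corresponding to shared leading-term variables) will pin down the precise form of the auxiliary minors and confirm the three-entry-overlap claim.
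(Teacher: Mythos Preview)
Your lifting strategy is sound and would yield a correct proof, but the justification of step (a) has a gap. You assert that the three entries which an auxiliary minor $\tilde g_i$ shares with $\tilde f$ and $\tilde g$ are all outside $Z$; this fails precisely when $f$ or $g$ is a \emph{monomial} of $\mathcal{G}_2(Z)$, because then the lifted minor $\tilde f$ (or $\tilde g$) has one or two of its four matrix positions in $Z$---namely those on the vanishing diagonal---and $\tilde g_i$ may well pick those up. Fortunately the conclusion you need is cheaper than the three-entry-overlap argument: by definition $\mathcal{G}_2(Z)$ consists of the $2\times 2$ minors of the specialized matrix, so the image $\bar{\tilde g_i}$ of \emph{any} $2\times 2$ minor is automatically either $0$ (in which case that summand simply drops from the projected standard representation) or an element of $\mathcal{G}_2(Z)$, up to sign. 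Your verification of (b) then goes through unchanged: whenever $\bar{\tilde g_i}\neq 0$ the $Z$-free diagonal is the $>_R$-leading term of $\tilde g_i$, so $\init_{>_R}(q_i\tilde g_i)$ is $Z$-free and the inequality descends.

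The paper argues differently. Rather than a uniform lift-and-project, it does a short case analysis on whether $f,g$ are monomials or binomials. The monomial/monomial case is trivial. For a monomial $f=x_{kl}x_{pq}$ and a binomial $g=x_{ij}x_{kl}-x_{il}x_{kj}$ sharing $x_{kl}$ in their leading terms, one computes $S(f,g)=x_{il}x_{kj}x_{pq}$ and observes that $f$ being a monomial forces $(k,q)\in Z$ or $(p,l)\in Z$, producing a monomial of $\mathcal{G}_2(Z)$ dividing $S(f,g)$. For two binomials $f,g$ with overlapping leading terms, both sit in a common $3\times 3$ submatrix $M$; since $f,g$ together occupy seven of the nine entries of $M$, at most the two opposite corners can lie in $Z$, and one either invokes the classical Gr\"obner basis result for a fully generic $3\times 3$ matrix or, when a corner is missing, reduces $S(f,g)$ in one line using the two surviving monomial minors. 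Your approach is more uniform and hides this casework inside the projection step; the paper's is more hands-on but entirely self-contained and avoids having to track what happens to leading terms under specialization.
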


\begin{proof}
    Let $f, g \in \mathcal{G}_2(Z)$.  By the Buchberger criterion, we must show that the $S$-polynomial $S(f,g)$ of $f$ and $g$ reduces to $0$ with respect to $\mathcal{G}_2(Z)$.  A two-by-two minor of a generic matrix that belongs to ${\rm Mat}_{m \times n}(Z)$ is either a monomial or a binomial.  Hence, for $f$ and $g$ we have the following possibilities: (i) $f$ and $g$ are monomials; (ii) $f$ is a monomial and $g$ is a binomial; (iii) $f$ and $g$ are binomials.

    In case (i), by definition we have $S(f,g) = 0$, and there is nothing to prove.

    We recall that if the initial monomials of $f$ and $g$ are coprime, then $S(f,g)$ automatically reduces to $0$ (with respect to $\{f,g\}$).  Hence, for the rest of this proof, it suffices to consider the situation where the initial monomials of $f$ and $g$ are not coprime.  Consequently, in case (ii), we may assume without loss of generality that
    \[
    f = x_{kl} x_{pq}, \quad g = x_{ij} x_{kl} - x_{il} x_{kj},
    \]
    and the initial monomial of $g$ is $x_{ij} x_{kl}$.  It is evident that $S(f,g) = x_{il} x_{kj} x_{pq}$.  Note that $f = x_{kl} x_{pq}$ implies that at least one of $(k,q)$ and $(p,l)$ belongs to $Z$.  If $(k,q) \in Z$, then $\pm x_{kj} x_{pq} \in \mathcal{G}_2(Z)$; if $(p,l) \in Z$, then $\pm x_{il} x_{pq} \in \mathcal{G}_2(Z)$.  In both cases $S(f,g)$ reduces to $0$ with respect to $\mathcal{G}_2(Z)$.

    In case (iii), we may assume without loss of generality that $f$ and $g$ are two-by-two minors of a $3 \times 3$ submatrix $M$ of a generic element of ${\rm Mat}_{m \times n}(Z)$.  For the sake of ease of presentation, we permute the rows and columns of $M$ so that $f$ (resp. $g$) is the top-left (resp. bottom-right) two-by-two minor of $M$.  If all entries of $M$ are nonzero for a generic element of ${\rm Mat}_{m \times n}(Z)$, then $S(f,g)$ reduces to $0$ (with respect to the two-by-two minors of $M$, hence, with respect to $\mathcal{G}_2(Z)$), since the set of two-by-two minors of a matrix is a universal degree revlex Gr\"obner basis of the ideal it generates (see, for example, \cite[Proposition 5.3.8]{BCRV}).  If some entry of $M$ is zero for all elements of ${\rm Mat}_{m \times n}(Z)$, then it can only be the top-right or bottom-left entry.  We assume without loss of generality that the top-right entry of $M$ is zero.  Then
    \[
    \pm m_{11} m_{23}, \pm m_{12} m_{33} \in \mathcal{G}_2(Z).
    \]
    Since
    \begin{align*}
        S(f,g) & = S(m_{11} m_{22} - m_{12} m_{21}, m_{22} m_{33} - m_{23} m_{32}) \\
        & = m_{11} m_{23} m_{32} - m_{12} m_{21} m_{33} = (m_{11} m_{23}) m_{32} - (m_{12} m_{33}) m_{21},
    \end{align*}
    we see that $S(f,g)$ reduces to $0$ with respect to $\mathcal{G}_2(Z)$ in this case as well.
\end{proof}

\section{The Affine Scheme \texorpdfstring{$\Ominnn$}{Ominnn}}

Fix a presentation $\CC[\mathfrak{n}^+\oplus\mathfrak{n}^-]=\CC[x_1,x_2,\ldots,x_{n^2+n}]$, where we label the $(i,j)$-th coordinate (with $i\neq j$) by 
$$
    %a_{ij}=
    \begin{cases}
        x_{(n+1)(i-j+n)+i}, & \text{if } i<j, \\
        x_{(n+1)(i-j-1)+i}, & \text{if } i>j,
    \end{cases}
$$
that is
\begin{equation*}
\setlength\arraycolsep{0.5em}\def\arraystretch{1.5}
\begin{pmatrix}
        0 & x_{n^2} & x_{n^2-n-1} &  \cdots &  x_{2n+3} & x_{n+2} & x_{1}\\
        x_{2} & 0 & x_{n^2+1} & x_{n^2-n} & \cdots & x_{2n+4} & x_{n+3}\\
        x_{n+4} & x_{3} & 0 & x_{n^2+2} &  x_{n^2-n+1}  & \cdots & x_{2n+5} \\
        x_{2n+6} & x_{n+5} & x_{4} & \ddots  & \ddots  & \ddots  & \vdots \\
        \vdots & \ddots & \ddots & \ddots & 0 & x_{n^2+n-2}  & x_{n^2-3}\\
        x_{n^2-2} &  \cdots & x_{3n+2} & x_{2n+1} & x_{n}  &0  & x_{n^2+n-1} \\
        x_{n^2+n} & x_{n^2-1}  & \cdots & x_{3n+3} &  x_{2n+2} & x_{n+1} & 0
    \end{pmatrix} \tag{$\spadesuit$}.
\end{equation*}

%%%%%%%
\iffalse
\begin{Cor} \label{cor:compatible}
    The leading monomial of a two-by-two minor of ($\spadesuit$) with respect to the degree revlex order on $\CC[x_1, x_2, \ldots, x_{n^2+n}]$ is
\end{Cor}

\begin{proof}
    This follows from Theorem \ref{compatible} and the definition of the degree revlex order.
\end{proof}
\fi
%%%%%%%

Let $Z = \{(1,1), (2,2), \ldots, (n+1,n+1)\} \subset [1,n+1] \times [1,n+1]$, so ${\rm Mat}_{(n+1) \times (n+1)}(Z) \cong \Lien^+ \oplus \Lien^-$ and the ideal $I_2(Z)$ defines the affine scheme $\Ominbar \cap (\Lien^+ \oplus \Lien^-)$.
\begin{Cor}
    A universal degree revlex Gr\"obner basis of the ideal $I_2(Z)$ of the polynomial ring $\CC[x_1, x_2, \ldots, x_{n^2+n}]$ is given by
    \[
    \mathcal{G}_2(Z) = \{ \text{two-by-two minors of the matrix (} \spadesuit \text{)} \}.
    \]
\end{Cor}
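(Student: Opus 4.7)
The plan is to observe that this corollary is a direct specialization of Theorem \ref{Thm:2by2Groebner} to the particular choice of $m = n+1$ and $Z = \{(i,i) : 1 \le i \le n+1\}$, so no new computational work is required beyond identifying the geometric objects correctly.

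First I would verify the identification ${\rm Mat}_{(n+1) \times (n+1)}(Z) \cong \Lien^+ \oplus \Lien^-$: by definition, ${\rm Mat}_{(n+1) \times (n+1)}(Z)$ consists of $(n+1) \times (n+1)$ matrices whose diagonal entries are zero, which is exactly the direct sum of the strictly upper and strictly lower triangular subalgebras of $\mathfrak{sl}_{n+1}(\CC)$. The coordinate ring is then naturally the polynomial ring $\CC[x_{ij} : i \ne j]$, which under the indexing convention fixed in ($\spadesuit$) is the same as $\CC[x_1, x_2, \ldots, x_{n^2 + n}]$.

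Next I would identify the ideal. The closure $\Ominbar$ of the minimal nilpotent orbit in $\mathfrak{sl}_{n+1}(\CC)$ is scheme-theoretically cut out by the vanishing of all $2 \times 2$ minors of a generic matrix (together with the trace condition, which is automatic on $\Lien^+ \oplus \Lien^-$). Restricting these defining equations to matrices with zero diagonal, the ideal defining $\Ominnn$ inside $\CC[{\rm Mat}_{(n+1) \times (n+1)}(Z)]$ is precisely $I_2(Z)$ as defined in the previous section. Hence $\mathcal{G}_2(Z)$ is exactly the set of two-by-two minors of ($\spadesuit$), once we forget those $2\times 2$ minors that involve a diagonal variable in each term (those become genuine two-by-two minors of ($\spadesuit$) with a zero substituted, yielding either a monomial or a binomial).

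Finally, I would invoke Theorem \ref{Thm:2by2Groebner} verbatim: it asserts that for \emph{any} choice of $Z$, the set $\mathcal{G}_2(Z)$ is a universal degree revlex Gröbner basis of $I_2(Z)$. Applying this with our $Z$ gives the conclusion. There is no real obstacle here; the only thing to be careful about is the bookkeeping that ($\spadesuit$) really does list every off-diagonal entry exactly once using the indices $x_1, \ldots, x_{n^2+n}$, so that ``two-by-two minors of ($\spadesuit$)'' is literally the same set as $\mathcal{G}_2(Z)$ under the identification above.
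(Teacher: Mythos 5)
Your proposal is correct and is exactly the paper's argument: the paper's entire proof is ``Apply Theorem \ref{Thm:2by2Groebner} to ${\rm Mat}_{(n+1)\times(n+1)}$ and $Z=\{(1,1),\ldots,(n+1,n+1)\}$,'' and your additional bookkeeping (checking that ${\rm Mat}_{(n+1)\times(n+1)}(Z)\cong\Lien^+\oplus\Lien^-$ and that the minors of ($\spadesuit$) coincide with $\mathcal{G}_2(Z)$) is just making the same specialization explicit.
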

\begin{proof}
    Apply Theorem \ref{Thm:2by2Groebner} to ${\rm Mat}_{(n+1) \times (n+1)}$ and $Z = \{(1,1), (2,2), \ldots, (n+1,n+1)\}$.
\end{proof}

Let $L \subset \RR^n$ be the lattice spanned by the roots $\Phi$ of type $C_n$.  Let $\Sigma$ be the fan in $\RR^n$ whose strongly convex polyhedral cones are generated by $\underline{c}$-compatible subsets of almost positive roots $\Phi_{\geq-1}$ of type $C_n$. Write $X_\Sigma$ for the projective toric variety (for the torus $T = L \otimes_{\ZZ} \CC^{\times} \cong  (\CC^{\times})^n$) associated with the fan $\Sigma$.  Let $\CC^{\times}$ act on $\slnn(\CC)$ be scaling the matrix entries.  It is well-known that the nilpotent adjoint orbits are stable under this action.  Hence, we get a $\CC^{\times}$-action on $\Ominnn$.  Equivalently, the coordinate ring $\CC[\Ominnn]$ becomes a $\ZZ$-graded algebra whose degree $k$ component is
\[
\{f \in \CC[\Ominnn] \colon z \cdot f = z^kf, ~ \forall z \in \CC^{\times}\}.
\]
\begin{Thm} \label{Thm:degeneration} The coordinate ring $\CC[\Ominnn]$ degenerates flatly to the $T$-equivariant cohomology $H^*_T(X_{\Sigma})$, i.e., %there exists a flat family over the curve $\mathbb C$ whose fiber over $z \in \CC^{\times} \subset \CC$ is $\Ominnn$ and whose fiber over $0 \in \CC$ is $\Spec H^*_T(X_{\Sigma})$. 
there exists a flat morphism $\pi: \mathfrak{X} \rightarrow \CC$ such that
\begin{center}
    \begin{tikzcd}
	    {{\rm Spec} H^*_T(X_{\Sigma}) \cong \pi^{-1}(0)} & {\mathfrak{X}} & {\pi^{-1}(\CC^{\times}) \cong \left( \Ominnn \right) \times \CC^{\times}} \\
	    0 & {\CC} & {\CC^{\times}}
	    \arrow[hook, from=1-1, to=1-2]
	    \arrow[from=1-1, to=2-1]
	    \arrow["\pi", from=1-2, to=2-2]
	    \arrow[hook', from=1-3, to=1-2]
	    \arrow["{{\rm pr}_2}", from=1-3, to=2-3]
	    \arrow[hook, from=2-1, to=2-2]
	    \arrow[hook', from=2-3, to=2-2]
    \end{tikzcd}
\end{center}
Moreover, the Hilbert series of $\C[\Ominnn]$ is given by
    \[
    h_{\C[\Ominnn]} (t) = \frac{\sum \limits_{k=0}^n \binom{n}{k}^2 t^k}{(1-t)^n}.
    \]
\end{Thm}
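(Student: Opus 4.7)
The plan is to deduce both assertions from the preceding Corollary via a standard Gr\"obner degeneration, together with the combinatorial identification of the initial ideal with the Stanley--Reisner ideal of the type $C_n$ cluster complex. First I would choose a weight vector $w \in \ZZ_{>0}^{n^2+n}$ on the variables refining the degree revlex order in the sense that $\init_w(f)$ equals the degree revlex initial monomial of $f$ for every $f \in \mathcal{G}_2(Z)$; since $\mathcal{G}_2(Z)$ is finite this is always possible. Let $\tilde{I} \subset \CC[x_1, \ldots, x_{n^2+n}, t]$ be the $w$-homogenization of $I_2(Z)$. By the standard theory of Gr\"obner degenerations, the quotient $R := \CC[x_1, \ldots, x_{n^2+n}, t]/\tilde{I}$ is flat over $\CC[t]$, with fiber over any $t \neq 0$ isomorphic to $\CC[\Ominnn]$ and fiber over $t = 0$ isomorphic to $\CC[x_1, \ldots, x_{n^2+n}]/\init_w(I_2(Z))$. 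Taking $\mathfrak{X} := \Spec R$ and $\pi$ the projection to $\Spec \CC[t] \cong \CC$ then produces the desired diagram, provided the special fiber is identified as $\Spec H^*_T(X_\Sigma)$.

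The central step---and the main obstacle of the proof---is the identification $\init_w(I_2(Z)) = I_{\Delta_\Sigma}$, where $\Delta_\Sigma$ is the cluster complex whose faces are the $\underline{c}$-compatible subsets of $\Phi^{C_n}_{\geq -1}$. By the Gr\"obner basis property, $\init_w(I_2(Z))$ is generated by the leading monomials of the elements of $\mathcal{G}_2(Z)$, each of which is either a pure squarefree degree-two monomial (when the underlying $2 \times 2$ submatrix of $(\spadesuit)$ contains a zero entry) or a binomial $x_\alpha x_\beta - x_\gamma x_\delta$ (when it does not). Using the bijection $(\dagger)$ between off-diagonal matrix positions and almost positive roots, together with the specific labeling in $(\spadesuit)$, one verifies by a case analysis that the leading monomial in each case is exactly $x_\alpha x_\beta$ for a pair of almost positive roots that are \emph{not} $\underline{c}$-compatible in the sense of Theorem \ref{compatible}, and conversely that every such incompatible pair arises this way. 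Granting this matching, $\init_w(I_2(Z)) = I_{\Delta_\Sigma}$ and Theorem \ref{thm:equivCohlgy} identifies the special fiber as $\Spec H^*_T(X_\Sigma)$.

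Finally, for the Hilbert series: Macaulay's theorem (Theorem \ref{Thm:Macaulay}) gives $h_{\CC[\Ominnn]}(t) = h_{\CC[\Delta_\Sigma]}(t)$, so it suffices to compute the Hilbert series of the Stanley--Reisner ring of the type $C_n$ cluster complex. The complex $\Delta_\Sigma$ is a pure simplicial sphere of dimension $n-1$ whose $h$-vector is the sequence of type $C_n$ Narayana numbers $\binom{n}{k}^2$ for $k = 0, \ldots, n$---a classical combinatorial fact recorded in \cite{FR}---and the standard formula $h_{\CC[\Delta]}(t) = h_\Delta(t)/(1-t)^n$ for the Hilbert series of the Stanley--Reisner ring of a pure $(n-1)$-dimensional complex then yields the claimed expression $\sum_{k=0}^n \binom{n}{k}^2 t^k/(1-t)^n$.
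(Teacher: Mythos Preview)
Your proposal is correct and follows essentially the same route as the paper's proof: identify the degree revlex initial ideal of $I_2(Z)$ with the Stanley--Reisner ideal $I_{\Delta_\Sigma}$ via Theorem~\ref{compatible} and the labeling $(\spadesuit)$, invoke the standard Gr\"obner degeneration to a flat family (the paper cites \cite[Theorem 4.4.10]{LB} rather than spelling out the weight-vector homogenization), and then read off the Hilbert series from Macaulay's theorem together with the known $h$-vector of the type $C_n$ cluster complex from \cite{FR}. The only cosmetic difference is that you make the flat-family construction explicit, whereas the paper outsources it to a reference.
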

\begin{proof}
    Recall the abstract simplicial complex $\Delta_{\Sigma}$ in Theorem \ref{thm:equivCohlgy}, as well as the isomorphism  %Put $S_\Sigma=\{\textrm{rays in }\Sigma\}$.  %, and define a simplicial complex $\Delta_\Sigma$ on $S_\Sigma$ by declaring that a subset of $S$ is a simplex if and only if the cone it spans is a cone in $\Sigma$. Then we have an isomorphism between the $T$-equivariant cohomology $H^*_T(X_{\Sigma})$ and the Stanley-Reisner ring
    %By Theorem \ref{thm:equivCohlgy},
    $$
        H^*_T(X_{\Sigma}) \cong \CC[\Delta_{\Sigma}].%\coloneqq\CC\left[x_s: s\in S_\Sigma\right]/\big(\prod_{s\in T}s_s: T\notin \Delta_\Sigma\big).
    $$
    By Theorem \ref{compatible} and the definition of the degree revlex order, we observe that the initial monomials of the Gr\"obner basis $\mathcal G_2(Z)$ correspond precisely to pairs of almost positive roots which are not $\underline{c}$-compatible. Now, our first statement follows from \cite[Theorem 4.4.10]{LB}.
    %A well-known theorem of Macaulay states that the Hilbert series of a homogeneous ideal agrees with that of its initial ideal with respect to any monomial order. 
    By Theorem \ref{Thm:Macaulay}, $h_{\Ominnn} (t) = h_{\CC[\Delta_{\Sigma}]} (t)$. The Hilbert series of $\CC[\Delta_{\Sigma}]$ is equal to the quotient of its $h$-polynomial by $(1-t)^n$ (see, for example, \cite[Theorem 6.15]{FMS}).  But the $h$-vector of $\Delta_{\Sigma}$ is $(\binom{n}{0}^2, \binom{n}{1}^2, \ldots, \binom{n}{n}^2)$, also known as Narayana numbers of type $C_n$ (see, for example, \cite{FR}).
\end{proof}

\begin{Thm} \label{Cor:degeneration}
    The scheme-theoretic intersection $\Ominnn$ is reduced, and Gorenstein (so, in particular, Cohen-Macaulay).
\end{Thm}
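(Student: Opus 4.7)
The plan is to transport both properties from the special fiber $\CC[\Delta_\Sigma]$ of the flat degeneration in Theorem \ref{Thm:degeneration} back to the generic fiber $\CC[\Ominnn]$, using one argument for reducedness and a separate argument for Gorensteinness.

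For \textbf{reducedness}, the key input is that by Theorem \ref{Thm:degeneration} the initial ideal $\init_>(I_2(Z))$ equals the Stanley-Reisner ideal $I_{\Delta_\Sigma}$, which is a squarefree monomial ideal and hence radical. A standard Gr\"obner-basis argument, using $\init_>(\sqrt{I}) \subseteq \sqrt{\init_>(I)}$ combined with Theorem \ref{Thm:Macaulay}, then shows that $I_2(Z)$ itself is radical: the containments force $\init_>(I_2(Z)) = \init_>(\sqrt{I_2(Z)})$, from which equality of Hilbert series promotes the obvious inclusion $I_2(Z) \subseteq \sqrt{I_2(Z)}$ to an equality. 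Hence $\CC[\Ominnn]$ is reduced.

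For \textbf{Gorensteinness}, by Theorem \ref{Thm:simplicialSphere} it suffices to show that $\Delta_\Sigma$ is a simplicial sphere. Theorem \ref{compatible} identifies $\Delta_\Sigma$ with the $\underline c$-cluster complex of almost positive roots of type $C_n$ in the sense of \cite{CP}; this complex is the boundary of the type-$C_n$ generalized associahedron (cyclohedron) \cite{FZ, CP}, and the boundary of a simplicial polytope is a simplicial $(n-1)$-sphere. Hence $\CC[\Delta_\Sigma]$ is Gorenstein. The Gorenstein property then lifts from the special fiber to the generic fiber of the flat family in Theorem \ref{Thm:degeneration} by the standard upper semicontinuity of Cohen-Macaulayness and of Cohen-Macaulay type in flat families, where equidimensionality of the fibers is automatic from the common Hilbert series. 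We conclude that $\CC[\Ominnn]$ is Gorenstein.

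The principal obstacle is the cluster-theoretic input: identifying $\Delta_\Sigma$ as the boundary complex of a known simplicial polytope, and thereby as a simplicial sphere. Once this is in hand, transferring reducedness via squarefreeness of the initial ideal and transferring Gorensteinness via semicontinuity of the Cohen-Macaulay type in a flat family are routine applications of results already collected in Section 2. All of the geometric content of the proof therefore sits in Theorem \ref{compatible} and in the polytopal structure of the type-$C_n$ cluster complex.
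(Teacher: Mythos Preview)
Your proof is correct and follows essentially the same route as the paper's: show that the initial ideal is the Stanley--Reisner ideal of the type-$C_n$ cluster complex, invoke the fact that this complex is a simplicial sphere (the paper cites \cite[Theorem~1.4]{CFZ02} directly; your phrasing ``boundary of the generalized associahedron'' should strictly read ``boundary of its polar dual,'' since the associahedron itself is simple rather than simplicial), apply Theorem~\ref{Thm:simplicialSphere}, and then transfer reducedness and Gorensteinness from $R/\init_>(I_2(Z))$ back to $R/I_2(Z)$. The only cosmetic difference is that the paper packages the transfer step into a single citation of \cite[Proposition~1.6.2]{BCRV}, whereas you unpack the radical-initial-ideal argument explicitly and phrase the Gorenstein transfer via semicontinuity of Cohen--Macaulay type; both formulations are standard and amount to the same thing.
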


\begin{proof}
    Let $R$ be a polynomial ring and $>$ be a monomial order on $R$.  It is well-known that (see, for example, \cite[Proposition 1.6.2]{BCRV}), for any ideal $I$ of $R$, if $R/\init_>(I)$ is reduced (resp. Gorenstein), so is $R/I$.  Hence, by Theorem \ref{Thm:degeneration}, it suffices to prove that the Stanley-Reisner ring $\CC[\Delta_{\Sigma}]$ is reduced and Gorenstein.

    It is well-known that Stanley-Reisner rings are reduced.  By \cite[Theorem 1.4]{CFZ02}, the simplicial complex $\Delta_{\Sigma}$ is a simplicial sphere.  Hence, by Theorem \ref{Thm:simplicialSphere}, $\CC[\Delta_{\Sigma}]$ is Gorenstein.
\end{proof}

Another consequence of Theorem \ref{Thm:degeneration} is an elementary proof of the property that $\Ominbar$ is Gorenstein.  Our proof, in particular, does not invoke the theory of symplectic singularities.

\begin{Cor}\label{Cor:OminGor}
    The scheme $\Ominbar$ is Gorenstein.
\end{Cor}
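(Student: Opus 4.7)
The strategy is to transfer the Gorenstein property from $\CC[\Ominnn]$, known to be Gorenstein by Theorem \ref{Cor:degeneration}, up to $\CC[\Ominbar]$ via a regular sequence. Let $t_1,\ldots,t_n$ be a basis of linear forms on $\Lieh\subset\slnn$, regarded as degree-$1$ elements of $\CC[\Ominbar]$ via the decomposition $\slnn=\Lien^+\oplus\Lieh\oplus\Lien^-$. Tautologically $\CC[\Ominbar]/(t_1,\ldots,t_n)=\CC[\Ominnn]$. From the Hilbert series computed in Theorem \ref{Thm:degeneration} one reads off $\dim\CC[\Ominnn]=n$, while $\dim\CC[\Ominbar]=2n$ since $\Omin$ is the (classical) minimal adjoint orbit of $\slnn$. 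Thus $(t_1,\ldots,t_n)$ is a homogeneous system of parameters on $\CC[\Ominbar]$.

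The crux is to show that $(t_1,\ldots,t_n)$ is in fact a \emph{regular} sequence on $\CC[\Ominbar]$. Once this is established, the standard fact that quotienting by a regular sequence preserves the Gorenstein property in both directions (see, e.g., \cite[Proposition 3.1.19(b)]{BH98}), combined with Theorem \ref{Cor:degeneration}, concludes the proof. Since a homogeneous system of parameters in a Cohen-Macaulay graded algebra is automatically a regular sequence, it suffices to prove that $\CC[\Ominbar]$ is Cohen-Macaulay.

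To establish Cohen-Macaulayness, I would realize $\Ominbar$ as the scheme-theoretic hyperplane section, by $\mathrm{tr}=0$, of the rank-$\leq 1$ matrix variety $V\subset\mathrm{Mat}_{(n+1)\times(n+1)}$. Theorem \ref{Thm:2by2Groebner} with $Z=\emptyset$ exhibits the $2\times 2$ minors as a universal Gr\"obner basis, whose initial ideal is a squarefree monomial ideal. A shellability analysis of the resulting simplicial complex (the classical staircase triangulation of a product of simplex boundaries, in the same spirit as the cluster-complex argument for $\Ominnn$) combined with Theorem \ref{Thm:shellable} shows that the associated Stanley-Reisner ring is Cohen-Macaulay, and Cohen-Macaulayness then lifts through the flat Gr\"obner degeneration to $\CC[V]$. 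Since $V$ is irreducible and $\mathrm{tr}$ does not vanish identically on $V$, the trace is a non-zero-divisor on $\CC[V]$, and $\CC[\Ominbar]=\CC[V]/(\mathrm{tr})$ inherits Cohen-Macaulayness.

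The main obstacle will be the shellability verification for the Gr\"obner degeneration of the Segre cone $V$; this step can alternatively be bypassed by quoting the Hochster-Eagon theorem on the Cohen-Macaulayness of determinantal varieties.
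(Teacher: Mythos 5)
Your proposal is correct and its overall skeleton coincides with the paper's: both reduce the corollary to showing that the $n$ independent diagonal linear forms (cutting $\Lien^+\oplus\Lien^-$ out of $\slnn$) form a regular sequence on $\CC[\Ominbar]$ with quotient $\CC[\Ominnn]$, and then transfer the Gorenstein property across that regular sequence using Theorem \ref{Cor:degeneration}. Where you genuinely diverge is in certifying regularity. The paper does it by a pure Hilbert-series comparison: it quotes the Hilbert series of $\CC[\Ominbar]$ from \cite[Theorem 4.4]{Jia23}, observes that it equals $(1-t)^{-n}$ times the Hilbert series of $\CC[\Ominnn]$ computed in Theorem \ref{Thm:degeneration}, and invokes the standard fact that such an identity forces the sequence to be regular. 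You instead argue that the forms are part of a homogeneous system of parameters (by the dimension count $2n\to n$) and upgrade this to a regular sequence by proving $\CC[\Ominbar]$ is Cohen--Macaulay, realizing it as the hyperplane section $\{\operatorname{tr}=0\}$ of the Segre cone $V$ of rank-$\le 1$ matrices and appealing to Hochster--Eagon (or a shelling of the staircase complex). Both routes work; the paper's buys self-containedness given the earlier Hilbert-series computation and avoids any structural input on determinantal rings, while yours avoids the reference \cite{Jia23} at the cost of importing Cohen--Macaulayness of determinantal varieties. (In fact, your route could be shortened: the determinantal ring of $2\times 2$ minors of a \emph{square} generic matrix is classically Gorenstein, so $\CC[V]/(\operatorname{tr})$ is Gorenstein directly, without passing through $\Ominnn$ at all.)

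One point you should nail down is the identification $\CC[\Ominbar]\cong\CC[V]/(\operatorname{tr})$. A priori the right-hand side is only a scheme supported on $\Ominbar$; to conclude they agree you need the quotient to be reduced. This does follow from your own setup --- $\CC[V]/(\operatorname{tr})$ is Cohen--Macaulay, hence has no embedded primes, and the intersection $V\cap\{\operatorname{tr}=0\}$ is generically transverse (at a rank-one matrix $uv^{T}$ the tangent space to $V$ is not contained in the trace hyperplane), so the quotient is generically reduced and therefore reduced --- but as written this step is asserted rather than proved, and it is the only place where your argument is not yet complete. Also, strictly speaking $(t_1,\dots,t_n)$ is \emph{part of} a homogeneous system of parameters rather than a full one; the statement you need is that in a Cohen--Macaulay graded algebra any homogeneous sequence dropping the dimension by its length is regular, which is the correct standard fact.
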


\begin{proof}
    Let $R$ be a $\ZZ$-graded Noetherian $\CC$-algebra and $r \in R$ an element of degree $d$.  We have an exact sequence of $\ZZ$-graded Noetherian $\CC$-algebras
    \[
    0 \longrightarrow K \longrightarrow R[-d] \xrightarrow{~~\cdot r~~} R \longrightarrow R/(r) \longrightarrow 0,
    \]
    where $K$ stands for the kernel of the map $R[-d] \rightarrow R$ given by multiplication by $r$.  Hence, for the Hilbert series, we have
    \[
    h_K(t) - h_{R[-d]}(t) + h_R(t) - h_{R/(r)}(t) = 0.
    \]
    It follows that $r$ is a non-zero-divisor if and only if $- h_{R[-d]}(t) + h_R(t) - h_{R/(r)}(t) = 0$, i.e.,
    \[
    h_{R/(r)}(t) = (1-t^d) h_R(t).
    \]
    In particular, a sequence $(r_1, r_2, \ldots, r_k)$ of degree $1$ elements of $R$ is a regular sequence if
    \[
    h_{R/(r_1, r_2, \ldots, r_k)}(t) = (1-t)^k h_R(t).
    \]
    
    By \cite[Theorem 4.4]{Jia23}, the Hilbert series of $\CC[\Ominbar]$ is given by
    \[
    h_{\CC[\Ominbar]}(t) = \frac{\sum \limits_{k=0}^n \binom{n}{k}^2 t^k}{(1-t)^{2n}}.
    \]
    Comparing with the Hilbert series of $\CC[\Ominnn]$ in Theorem \ref{Thm:degeneration}, we see that the diagonal entries $(x_{11}, x_{22}, \ldots, x_{nn})$ (which generate the ideal of $\Lien^+ \oplus \Lien^-$ in $\CC[\slnn]$) form a regular sequence in $\CC[\Ominbar]$.  It follows (see, for example, \cite[Proposition 3.1.19, Exercise 3.6.20]{BH98}) that $\Ominbar$ is Gorenstein if and only if so is $\Ominnn$.  We are done by Theorem \ref{Cor:degeneration}.
\end{proof}

\iffalse
A simplicial complex is pure if its maximal faces have the same dimension.  Let $\Delta$ be a pure simplicial complex.  We say that $\Delta$ is shellable if its maximal faces can be enumerated
\[
F_1, F_2, \ldots, F_l
\]
in such a way that, for each $i \in [2,l]$, the intersection of $F_i$ with the simplicial subcomplex of $\Delta$ generated by $F_1, F_2, \ldots, F_{i-1}$ is generated by a nonempty set of maximal proper faces of $F_i$.

If $\Delta$ is a shellable simplicial complex, then $\CC[\Delta]$ is Cohen-Macaulay.
\fi

%%%%%%%%%%
It is evident that $\Lien^+ \oplus \Lien^- = \cup_{w \in S_{n+1}} \Ad_w \Lien^+$.  Hence, as sets,
    \[
    \Ominnn = \bigcup \limits_{w \in S_{n+1}} \Ominbar \cap \Ad_w \Lien^+ = \bigcup \limits_{w \in S_{n+1}} \Ad_w \bigl( \Ominn \bigr).
    \]
Moreover, it is easy to verify that, as a set, $\Ominn$ is a union of irreducible components of $\Ominnn$.  Therefore, it is important to understand the geometry of $\Ominn$.  It turns out that similar techniques to the proof of Theorem \ref{Thm:degeneration} and Theorem \ref{Cor:degeneration} can be applied to study $\Ominn$.  To this end, fix a presentation $\C[\Lien^+] = \CC[x_1, x_2, \ldots, x_{n(n+1)/2}]$, where we label the $(i,j)$-th coordinate (with $i < j$) by $$x_{(2n-j+i+2)(j-i-1)/2 + i},$$ that is
\begin{equation*}
\setlength\arraycolsep{0.5em}\def\arraystretch{1.5}
\begin{pmatrix}
        0 & x_1 & x_{n+1} & x_{2n} & \cdots & x_{n(n+1)/2-5} & x_{n(n+1)/2-2} & x_{n(n+1)/2} \\
        0 & 0 & x_2 & x_{n+2} & \cdots & x_{n(n+1)/2-8} & x_{n(n+1)/2-4} & x_{n(n+1)/2-1} \\
        0 & 0 & 0 & x_3 & \cdots & x_{n(n+1)/2-12} & x_{n(n+1)/2-7} & x_{n(n+1)/2-3} \\
        0 & 0 & 0 & 0 & \cdots & x_{n(n+1)/2-17} & x_{n(n+1)/2-11} & x_{n(n+1)/2-6} \\
        \vdots & \vdots & \vdots & \vdots & \ddots & \vdots & \vdots & \vdots \\
        0 & 0 & 0 & 0 & \cdots & 0 & x_{n-1} & x_{2n-1} \\
        0 & 0 & 0 & 0 & \cdots & 0 & 0 & x_n \\
        0 & 0 & 0 & 0 & \cdots & 0 & 0 & 0
    \end{pmatrix} \tag{$\varheart$}.
\end{equation*}
It is evident that the initial monomial of a two-by-two minor of ($\varheart$) with respect to the degree revlex order on $\CC[x_1, x_2, \ldots, x_{n(n+1)/2}]$ is the product of the northwest and southeast entries.

Let $Z' = \{(i,j) \colon i \geq j\} \subset [1,n+1] \times [1,n+1]$, so ${\rm Mat}_{(n+1) \times (n+1)}(Z') \cong \Lien^+$ and the ideal $I_2(Z')$ defines the affine scheme $\Ominn$.
\begin{Cor}
    A universal degree revlex Gr\"obner basis of the ideal $I_2(Z')$ of the polynomial ring $\CC[x_1, x_2, \ldots, x_{n(n+1)/2}]$ is given by
    \[
    \mathcal{G}_2(Z') = \{ \text{two-by-two minors of the matrix (} \varheart \text{)} \}.
    \]
\end{Cor}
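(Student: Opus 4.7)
The plan is to obtain this corollary as an immediate application of Theorem \ref{Thm:2by2Groebner}, exactly parallel to how the analogous Gröbner basis statement for $\Ominnn$ was derived just before Theorem \ref{Thm:degeneration}. The only thing to check carefully is that the setup matches: one has to identify the ambient affine space, the prescribed zero set, and the defining ideal correctly, and then verify that under our chosen labeling convention $(\varheart)$ the set $\mathcal{G}_2(Z')$ is literally the set of two-by-two minors of $(\varheart)$.

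Concretely, I would first unwind the identification ${\rm Mat}_{(n+1) \times (n+1)}(Z') \cong \Lien^+$ for $Z' = \{(i,j) \colon i \geq j\}$. Under this identification the coordinate ring $\CC[{\rm Mat}_{(n+1) \times (n+1)}(Z')]$ is the polynomial ring $\CC[x_{ij} \colon i < j]$, which is exactly $\CC[x_1, \ldots, x_{n(n+1)/2}]$ under the relabeling dictated by $(\varheart)$. Next I would note that the scheme-theoretic intersection $\Ominn$ is cut out by the vanishing of all two-by-two minors of a generic matrix in $\Lien^+$, since $\Ominbar$ is defined by the vanishing of all two-by-two minors of a generic $(n+1) \times (n+1)$ traceless matrix, and the trace condition is automatic for strictly upper triangular matrices. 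Thus the defining ideal of $\Ominn$ in $\CC[\Lien^+]$ is exactly $I_2(Z')$ in the notation of Section 5.

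With these identifications in place, the corollary is a direct instance of Theorem \ref{Thm:2by2Groebner} applied to the pair $\bigl({\rm Mat}_{(n+1)\times(n+1)}, Z'\bigr)$: that theorem guarantees that $\mathcal{G}_2(Z')$ is a universal degree revlex Gröbner basis of $I_2(Z')$ for every total order on the indeterminates, and in particular for the total order induced by the labeling in $(\varheart)$. There is no real obstacle here; the content of the statement is entirely contained in Theorem \ref{Thm:2by2Groebner}, and the corollary is only a matter of confirming that the specific $Z'$ of strictly upper triangular type falls within the hypotheses of that theorem (which it does, since $Z'$ is an arbitrary subset of $[1,n+1] \times [1,n+1]$). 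The mildest care point is simply bookkeeping the indexing in $(\varheart)$, but this is routine and does not affect the Gröbner basis conclusion, which is invariant under relabeling of the variables.
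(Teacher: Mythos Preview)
Your proposal is correct and matches the paper's own proof exactly: the paper's argument is simply ``Apply Theorem \ref{Thm:2by2Groebner} to ${\rm Mat}_{(n+1) \times (n+1)}$ and $Z'=\{(i,j) \colon i \geq j\}$,'' and your additional unwinding of the identifications is just a more detailed version of that same one-line deduction.
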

\begin{proof}
    Apply Theorem \ref{Thm:2by2Groebner} to ${\rm Mat}_{(n+1) \times (n+1)}$ and $Z'=\{(i,j) \colon i \geq j\}$.
\end{proof}

\begin{Thm}
    The scheme-theoretic intersection $\Ominn$ is reduced and Cohen-Macaulay.
\end{Thm}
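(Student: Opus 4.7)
The plan is to run the same strategy as in the proof of Theorem~\ref{Cor:degeneration}. First I would use the preceding corollary (the Gr\"obner basis for $I_2(Z')$) together with \cite[Theorem 4.4.10]{LB} to obtain a flat degeneration of $\CC[\Ominn]$ to $R/\init_>(I_2(Z'))$, where $R = \CC[x_1,\ldots,x_{n(n+1)/2}]$. Because each leading monomial of $\mathcal{G}_2(Z')$ is the squarefree product $x_{ki}x_{mj}$ of the NW and SE entries of a $2\times 2$ submatrix of $(\varheart)$, this initial ideal is the Stanley-Reisner ideal of the simplicial complex $\Delta'$ whose vertex set consists of the strict upper-triangular positions in $[1,n+1]\times[1,n+1]$ and whose faces are the subsets $F$ containing no pair $(k,i),(m,j)$ with $k<m$ and $i<j$ simultaneously. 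Stanley-Reisner rings are always reduced, so reducedness of $\CC[\Ominn]$ follows from \cite[Proposition 1.6.2]{BCRV}, which guarantees that reducedness descends from $R/\init_>(I)$ to $R/I$.

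For Cohen-Macaulayness, the plan is to show that $\Delta'$ is shellable and then invoke Theorem~\ref{Thm:shellable} together with the Cohen-Macaulay analogue of \cite[Proposition 1.6.2]{BCRV}. A short combinatorial analysis first shows that $\Delta'$ is pure of dimension $n-1$: within each diagonal $\mathcal{D}_d = \{(i,i+d) : 1 \le i \le n+1-d\}$ any two distinct positions form a forbidden pair, and for $d_1 < d_2$ the compatibility of a position in $\mathcal{D}_{d_1}$ with one in $\mathcal{D}_{d_2}$ reduces to $0 \leq i_{d_1}-i_{d_2} \leq d_2-d_1$. This identifies the facets with binary words $s=(s_1,\ldots,s_{n-1})\in\{0,1\}^{n-1}$ via $i_d(s)=1+\sum_{k\geq d}s_k$ (with $i_n=1$ forced), yielding $2^{n-1}$ facets $F(s)$.

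I would then propose the shelling order on facets given by the integer $\nu(s)=\sum_{d=1}^{n-1}s_d\cdot 2^{d-1}$ (so $s_1$ is the least significant bit), and verify that for every nonzero $s$ the subcomplex $F(s)\cap\bigcup_{\nu(t)<\nu(s)}F(t)$ is generated by a nonempty collection of codimension-$1$ faces of $F(s)$; such a codimension-$1$ face is obtained by removing the position in one diagonal, and the removable diagonals can be read off combinatorially from the bits of $s$. The small cases $n=2,3,4$ can be checked by hand and suggest the correct pattern. The main obstacle will be the inductive or case-by-case verification that this shelling condition holds uniformly for all $s$; an alternative route I would consider is establishing vertex-decomposability of $\Delta'$ directly, starting with the cone vertex $(1,n+1)$, which is compatible with every other vertex of $\Delta'$, and then continuing by peeling off carefully chosen shedding vertices of the link.
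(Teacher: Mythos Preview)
Your strategy is exactly the paper's: degenerate to the initial ideal, recognize it as a Stanley--Reisner ideal, and prove the underlying complex is shellable. Your description of $\Delta'$ is correct and agrees with the paper's. What you are missing is a structural observation that makes the shelling almost automatic: the complex $\Delta'$ is the \emph{order complex} of the poset on $\{(i,j):1\le i<j\le n+1\}$ with $(i,j)\prec(k,l)$ iff $[i,j]\subset[k,l]$. Two positions fail your ``no NW pair'' condition precisely when the corresponding intervals are incomparable, so the facets are maximal chains, i.e.\ north/east lattice paths from some $(i,i+1)$ to $(1,n+1)$. The paper then shells these paths lexicographically (first by starting point, then ``north before east'' at the first discrepancy) and verifies the shelling in two lines: if a face of a path $P$ lies in an earlier path, some southeast corner of $P$ is absent, and flipping that corner yields an earlier path meeting $P$ in a codimension-one face.

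Your binary-word parametrization is in fact a relabelling of exactly this path picture ($s_d=1$ for a north step, $s_d=0$ for an east step), and your proposed order by $\nu(s)=\sum s_d 2^{d-1}$ \emph{does} give a shelling: the restriction face of $F(s)$ is $\{(i_d,i_d+d):s_d=1,\ s_{d-1}=0\}$, and one checks that any face of $F(s)$ containing these positions cannot lie in an earlier facet by looking at the highest index where $s$ and $t$ differ. So the ``main obstacle'' you flag is not hard, but it becomes a one-paragraph argument once you see the order-complex/path description rather than the diagonal-by-diagonal coordinates. The vertex-decomposability route you mention would also succeed, but is more work than needed here.
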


\begin{proof}
%%%%%%%%
\iffalse
Fix a total order
\begin{align*}
    \begin{pmatrix}
        0 & x_1 & x_{n+1} & x_{2n} & \cdots & x_{n(n+1)/2-5} & x_{n(n+1)/2-2} & x_{n(n+1)/2} \\
        0 & 0 & x_2 & x_{n+2} & \cdots & x_{n(n+1)/2-8} & x_{n(n+1)/2-4} & x_{n(n+1)/2-1} \\
        0 & 0 & 0 & x_3 & \cdots & x_{n(n+1)/2-12} & x_{n(n+1)/2-7} & x_{n(n+1)/2-3} \\
        0 & 0 & 0 & 0 & \cdots & x_{n(n+1)/2-17} & x_{n(n+1)/2-11} & x_{n(n+1)/2-6} \\
        \vdots & \vdots & \vdots & \vdots & \ddots & \vdots & \vdots & \vdots \\
        0 & 0 & 0 & 0 & \cdots & 0 & x_{n-1} & x_{2n-1} \\
        0 & 0 & 0 & 0 & \cdots & 0 & 0 & x_n \\
        0 & 0 & 0 & 0 & \cdots & 0 & 0 & 0
    \end{pmatrix}
\end{align*}
\fi
%%%%%%%%
Define a partial order $\prec$ on $\{(i,j) \colon 1 \leq i < j \leq n+1\}$ by declaring that $(i,j) \prec (k,l)$ if $[i,j] \subset [k,l]$.  The order complex $\Delta'$ of this poset is the abstract simplicial complex whose ground set is $\{(i,j) \colon 1 \leq i < j \leq n+1\}$ and whose faces are chains $(i_1,j_1) \prec (i_2,j_2) \prec \cdots \prec (i_l,j_l)$.  Identify $\alpha\in\Phi_+^{A_n}$ with the position $(i,j)$ of the $\alpha$-root space.  It is then clear that, with respect to the degree revlex order determined by ($\varheart$), the initial ideal of the ideal of $\Ominn$ in $\C[\Lien^+]$ is the Stanley-Reisner ideal of the order complex of $(\Delta', \prec)$, which we will prove to be shellable. 
%By definition,
%\begin{align*}
    %\CC[\Delta'] & = \CC[x_{ij} \colon 1 \leq i < j \leq n+1]/\big( x_{ij} x_{kl}, \text{ such that } \\
    %& \qquad \qquad \qquad \qquad \qquad \qquad \qquad (i,j) \text{ and } (k,l) \text{ are incomparable with respect to } \prec \big) \\
    %& = \CC[x_1, x_2, \ldots, x_{n(n+1)/2}]/\init_> \big( I_2(Z') \big),
%\end{align*}
%where $>$ stands for the degree revlex order on $\CC[x_1, x_2, \ldots, x_{n(n+1)/2}]$.
The maximal faces of the order complex are paths from $(i,i+1)$ for some $i \in [1,n]$ to $(1,n+1)$ such that each step is either east or north.  In particular, this simplicial complex is pure. A
 shelling is given as follows.  A path from $(i,i+1)$ precedes one from $(j,j+1)$ if $i < j$.  For two paths $P,Q$ from $(i,i+1)$, the path $P$ precedes $Q$ if for the first step where $P$ and $Q$ are different, $P$ is north and $Q$ is east.  

We show that this enumeration is indeed a shelling. Fix a path $P$, and let $F$ be a face of the intersection of $P$ with the simplicial subcomplex generated by the paths that precede $P$. So $P$ is not the first maximal face of the simplicial complex that contains $F$ as a face. 
In particular, $F$ does not contain some southeast corner of $P$ (we regard $(i,i+1)$ as a southeast corner of $P$ if the first step of $P$ is north). Let $Q$ be the path obtained from $P$ by replacing this southeast corner with the northwest one.  By definition, the path $Q$ precedes $P$, and $P$ and $Q$ intersect in a maximal proper face of $P$ which contains $F$. 

Therefore, by Theorem \ref{Thm:shellable}, the Stanley-Reisner ring $\CC[\Delta_{\Sigma}]$ is Cohen-Macaulay.  The rest of the proof is the same as that of Theorem \ref{Cor:degeneration}.
\end{proof}

%\nocite {*}
\bibliographystyle{alpha} % We choose the "alpha" reference style
\bibliography{refs} % Entries are in the refs.bib file

\end{document}